\documentclass[12 pt]{amsart}
\usepackage{amssymb,latexsym,amsmath,amscd,amsthm,graphicx, color}
\usepackage[all]{xy}
\usepackage{pgf,tikz}
\usepackage{mathrsfs}
\usepackage{cite}
\usetikzlibrary{arrows}
\definecolor{uuuuuu}{rgb}{0.26666666666666666,0.26666666666666666,0.26666666666666666}
\definecolor{xdxdff}{rgb}{0.49019607843137253,0.49019607843137253,1.}
\definecolor{ffqqqq}{rgb}{1.,0.,0.}

\raggedbottom

\pagestyle{empty}
\definecolor{uuuuuu}{rgb}{0.26666666666666666,0.26666666666666666,0.26666666666666666}
\definecolor{qqwuqq}{rgb}{0.,0.39215686274509803,0.}
\definecolor{zzttqq}{rgb}{0.6,0.2,0.}
\definecolor{xdxdff}{rgb}{0.49019607843137253,0.49019607843137253,1.}
\definecolor{qqqqff}{rgb}{0.,0.,1.}
\definecolor{cqcqcq}{rgb}{0.7529411764705882,0.7529411764705882,0.7529411764705882}

\raggedbottom
\pagestyle{empty}

\definecolor{uuuuuu}{rgb}{0.26666666666666666,0.26666666666666666,0.26666666666666666}
\definecolor{qqwuqq}{rgb}{0.,0.39215686274509803,0.}
\definecolor{zzttqq}{rgb}{0.6,0.2,0.}
\definecolor{xdxdff}{rgb}{0.49019607843137253,0.49019607843137253,1.}
\definecolor{qqqqff}{rgb}{0.,0.,1.}
\definecolor{cqcqcq}{rgb}{0.7529411764705882,0.7529411764705882,0.7529411764705882}

\setlength{\oddsidemargin}{0 in} \setlength{\evensidemargin}{0 in}
\setlength{\textwidth}{6.5 in} \setlength{\topmargin}{-.6 in}
\setlength{\headheight}{.00 in} \setlength{\headsep}{.3 in }
\setlength{\textheight}{9 in} \setlength{\footskip}{0 in}

\theoremstyle{plain}

\newtheorem{theorem}[subsection]{Theorem}

\newtheorem{lemma}[subsection]{Lemma}

\newtheorem{prop}[subsection]{Proposition}
\theoremstyle{definition}

\newtheorem{remark}[subsection]{Remark}

\newtheorem{notation}[subsection]{Notation}


\newcommand{\uu}{\cup}
\newcommand{\ii}{\cap}

\newcommand{\sci}{\subset}
\newcommand{\es}{\emptyset}
\newcommand{\set}[1]{\{#1\}}


\newcommand{\ga}{\alpha}
\newcommand{\gb}{\beta}

\newcommand{\gd}{\delta}
\renewcommand{\gg}{\gamma}

\newcommand{\gs}{\sigma}
\newcommand{\gt}{\tau}

\newcommand{\tit}{\textit}

\newcommand{\D}[1]{\mathbb{#1}}
\newcommand{\F}[1]{\mathfrak{#1}}

\newcommand{\te}{\text}

\newcommand{\ol}{\overline}
\newcommand{\ul}{\underline}

\newcommand{\tri}{\triangle}
\begin{document}

To appear, Topology Proceedings
\title{Quantization for uniform distributions of Cantor dusts on $\mathbb{R}^2$}

\author{Do\u gan \c C\"omez}
\address{Department of Mathematics \\
408E24 Minard Hall,
North Dakota State University
\\
Fargo, ND 58108-6050, USA.}
\email{Dogan.Comez@ndsu.edu}

\author{ Mrinal Kanti Roychowdhury}
\address{School of Mathematical and Statistical Sciences\\
University of Texas Rio Grande Valley\\
1201 West University Drive\\
Edinburg, TX 78539-2999, USA.}
\email{mrinal.roychowdhury@utrgv.edu}
\subjclass[2010]{Primary 28A80; Secondary 94A34, 60Exx.}
\keywords{Sierpi\'nski carpet, probability measure, optimal quantizer, quantization error, quantization dimension}
\thanks{The research of the second author was supported by U.S. National Security Agency (NSA) Grant H98230-14-1-0320}

\date{}
\maketitle

\pagestyle{myheadings}\markboth{Do\u gan \c C\"omez and Mrinal Kanti Roychowdhury}{Quantization for uniform distributions of Cantor dusts on $\mathbb{R}^2$}

\begin{abstract}  Let $P$ be a Borel probability measure on $\mathbb R^2$ supported by the Cantor dusts generated by a set of $4^u,\ u\geq 1$, contractive similarity mappings satisfying the strong separation condition.  For this probability measure, we determine the optimal sets of $n$-means and the $n$th quantization errors for all $n\geq 2$. In addition, it is shown that though the quantization dimension of the measure $P$ is known, the quantization coefficient for $P$ does not exist.

\end{abstract}

\section{Introduction}

Quantization for a probability distribution is the process of estimating it by a discrete probability that assumes only a finite number of levels in its support. For an in-depth analysis of quantization of probability measures  one may consult the excellent source by Graf-Luschgy  \cite{GL1} and the sources \cite{AW, GKL, GL, GN,  Z} to name a few.  In this paper, we are interested in the quantization of a particular type of continuous singular self similar probability measures.

Let $\D R^d$ denote the $d$-dimensional Euclidean space with the Euclidean norm $\Vert \ \Vert . $  For any $d\geq 1$ and $n\in \D N$, the $n$th \textit{quantization error} for a Borel probability measure $P$ on $\D R^d$ is defined by
\begin{equation*} \label{eq1} V_n:=V_n(P)=\inf \Big\{\int \min_{a\in\alpha} \|x-a\|^2 dP(x) : \alpha \subset \mathbb R^d, \text{ card}(\alpha) \leq n \Big\}.\end{equation*}
If $\int \| x\|^2 dP(x)<\infty$, then there is some set $\alpha$ for
which the infimum is achieved \cite{GL, GL1, GKL}. Such a set $\ga$ for which the infimum occurs and contains no more than $n$ points is called an \tit{optimal set of $n$-means}. The elements in an optimal set of $n$-means are called \tit{optimal quantizers}. If $\ga$ is a finite set, in general, the error $\int \min_{a \in \ga} \|x-a\|^2 dP(x)$ is often referred to as the \tit{distortion error} for $\ga$, and is denoted by $V(P; \ga)$. Then, the $n$th {\it quantization error for $P$} is defined by $V_n:=V_n(P)=\inf\set{V(P; \ga) :\alpha \subset \mathbb R^d, \text{ card}(\alpha) \leq n}$. It is known that for a continuous probability measure $P$ an optimal set of $n$-means always has exactly $n$ elements  \cite{GL1}.  Naturally, $\lim_n V_n(P)=0; $ hence, one would like to know more about the rate of convergence. To know some recent results in the direction of optimal sets of $n$-means and the $n$th quantization errors one can see \cite{CR, DR, R1, R2, RR}.  The numbers
\[\ul D(P):=\liminf_{n\to \infty}  \frac{2\log n}{-\log V_n(P)}, \te{ and } \ol D(P):=\limsup_{n\to \infty} \frac{2\log n}{-\log V_n(P)}, \]
are, respectively, called the \tit{lower} and \tit{upper quantization dimensions} of the probability measure $P$. If $\ul D(P)=\ol D (P)$, the common value is called the \tit{quantization dimension} of $P$ and is denoted by $D(P)$. Quantization dimension measures the speed at which the specified measure of the error tends to zero as $n$ approaches to infinity. Quantization dimension is also connected with other dimensions of dynamical systems, for details one can see \cite{GL1}.  More accurate information about the asymptotics of the quantization error is provided by the {\it quantization coefficients.}  For any $s\in (0, +\infty)$, the numbers $\liminf_n n^{\frac 2 s} V_n(P)$ and $\limsup_n n^{\frac 2s} V_n(P)$ are, respectively, called the $s$-dimensional \tit{lower} and \tit{upper quantization coefficients} for $P$. If the $s$-dimensional lower and upper quantization coefficients for $P$ are finite and positive, then $s$ coincides with the quantization dimension of $P$.

Given a finite set $\ga\sci \D R^d$, the Voronoi region generated by $a\in \ga$ is defined by
\[M(a|\ga)=\set{x \in \D R^d : \|x-a\|=\min_{b \in \ga}\|x-b\|},\]
i.e., the Voronoi region generated by $a\in \ga$ is the set of all elements in $\D R^d$ which are nearest to $a . $  The family of sets $\set{M(a|\ga) : a \in \ga}$ is called the \tit{Voronoi diagram} or \tit{Voronoi tessellation} of $\D R^d$ with respect to $\ga$. The generator $a\in \ga$ is called the centroid of its own Voronoi region with respect to the probability distribution $P$, if \begin{align*}
a=\frac{1}{P(M(a|\ga))}\int_{M(a|\ga)} x dP=\frac{\int_{M(a|\ga)} x dP}{\int_{M(a|\ga)} dP}.
\end{align*}
A Voronoi tessellation is called a {\it centroidal Voronoi tessellation} (CVT) if the generators are the centroids of their own Voronoi regions with respect to the measure $P.$ The following proposition provides further information on the Voronoi regions generated by an optimal set of $n$-means.
\begin{prop}(\cite{GG, GL1}) \label{prop10}
Let $\ga$ be an optimal set of $n$-means, $a \in \ga$, and $M(a|\ga)$ be the Voronoi region generated by $a\in \ga$, i.e.,
$M(a|\ga)=\{x \in \mathbb R^d : \|x-a\|=\min_{b \in \ga} \|x-b\|\}.$
Then, for every $a \in\ga$,
$(i)$ $P(M(a|\ga))>0$, $(ii)$ $ P(\partial M(a|\ga))=0$, $(iii)$ $a=E(X : X \in M(a|\ga))$, and $(iv)$ $P$-almost surely the set $\set{M(a|\ga) : a \in \ga}$ forms a Voronoi partition of $\D R^d$.
\end{prop}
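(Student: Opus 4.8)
The plan is to follow the classical variational argument for centroidal Voronoi tessellations (as in \cite{GG, GL1}), exploiting only optimality of $\ga$ together with the elementary ``parallel axis'' identity: for any Borel set $A$ with $P(A)>0$ and centroid $c=\frac{1}{P(A)}\int_A x\, dP$, one has $\int_A\|x-b\|^2\,dP=\int_A\|x-c\|^2\,dP+P(A)\,\|b-c\|^2$ for every $b\in\D R^d$, so the integral is minimized precisely at $b=c$. I would prove the four assertions in the order $(i)$, then the centroid property, then $(ii)$, and finally the remaining parts; I expect $(ii)$ to be the delicate point, because a hyperplane can \emph{a priori} carry positive mass, and the nullity of the boundaries in $(ii)$ is entangled with the centroid relation $(iii)$ through the tie-breaking on Voronoi boundaries.

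First I would prove $(i)$. Write $\ga=\set{a_1,\dots,a_n}$ and suppose $P(M(a_i|\ga))=0$ for some $i$. Removing $a_i$ alters the integrand $\min_{a}\|x-a\|^2$ only on $M(a_i|\ga)$, a $P$-null set, so $V(P;\ga\bs\set{a_i})=V(P;\ga)=V_n$; since $\ga\bs\set{a_i}$ has at most $n-1$ points this forces $V_{n-1}\le V_n$, hence $V_{n-1}=V_n$. But $P$ is continuous with infinite (uncountable) support, so $V_n<V_{n-1}$ strictly for every $n$ (see \cite{GL1}), a contradiction; thus every Voronoi region has positive mass.

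Next I would fix an arbitrary measurable partition $\set{R_i}$ obtained by breaking ties, i.e.\ $R_i\ci M(a_i|\ga)$, with the $R_i$ disjoint and covering $\D R^d$; then $V(P;\ga)=\sum_i\int_{R_i}\|x-a_i\|^2\,dP$. Replacing $a_i$ by the centroid $c_i$ of $R_i$ to form $\ga'$ and using $\set{R_i}$ as a (suboptimal) partition for $\ga'$ yields $V(P;\ga')\le V(P;\ga)-P(R_i)\,\|a_i-c_i\|^2$ via the identity above; optimality together with $(i)$ then forces $a_i=c_i$, the centroid of $R_i$. The crux, $(ii)$, now follows from the observation that this centroid relation must hold for \emph{every} tie-breaking. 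Applying it to two partitions that differ only by assigning the shared boundary $S_{ij}:=M(a_i|\ga)\ii M(a_j|\ga)$ to region $i$ versus region $j$, a short computation shows that if $P(S_{ij})>0$ then both $a_i$ and $a_j$ must equal the centroid of $S_{ij}$, whence $a_i=a_j$, which is impossible. Hence $P(S_{ij})=0$ for all $i\neq j$.

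Finally, since $\pa M(a_i|\ga)\ci\UU_{j\neq i}S_{ij}$, assertion $(ii)$ gives $P(\pa M(a_i|\ga))=0$. Because the boundaries are now null, $\int_{R_i}x\,dP=\int_{M(a_i|\ga)}x\,dP$ and $P(R_i)=P(M(a_i|\ga))$, so the centroid identity upgrades from $R_i$ to the closed region $M(a_i|\ga)$, giving $(iii)$: $a_i=E(X:X\in M(a_i|\ga))$. The covering $\UU_i M(a_i|\ga)=\D R^d$ is automatic and the pairwise overlaps $S_{ij}$ are $P$-null, which is exactly the statement that $\set{M(a_i|\ga)}$ forms a Voronoi partition $P$-almost surely, establishing $(iv)$. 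The main obstacle is thus sequencing the argument correctly: because the centroid property and the nullity of boundaries are mutually dependent, one must first secure the centroid relation for a single arbitrary tie-breaking and only afterward exploit it across competing tie-breakings to rule out mass on the boundaries.
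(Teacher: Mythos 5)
The paper never proves Proposition \ref{prop10}: it is stated as a known fact and quoted from the literature with the citation ``(see \cite{GG, GL1})'', so there is no in-paper argument to compare yours against. What you have written is, in substance, a correct reconstruction of the standard proof from Graf--Luschgy \cite{GL1}: part $(i)$ via the strict monotonicity $V_n<V_{n-1}$ (valid here since $P$ is continuous with infinite support), the centroid property via the parallel-axis identity plus optimality, and $(ii)$ by playing two tie-breakings of the Voronoi boundaries against each other; your sequencing --- centroid relation for a fixed tie-breaking first, then the swap argument to kill mass on shared boundaries $S_{ij}$, then upgrading the centroid relation from $R_i$ to the closed region $M(a_i|\ga)$ --- is exactly the right way to untangle the mutual dependence of $(ii)$ and $(iii)$. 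One imprecision is worth flagging: after fixing a tie-breaking $\{R_i\}$ you assert that optimality ``together with $(i)$'' forces $a_i=c_i$, but $(i)$ gives $P(M(a_i|\ga))>0$, not $P(R_i)>0$, and if $P(R_i)=0$ the centroid $c_i$ is undefined. The correct intermediate statement is: $a_i$ is the centroid of $R_i$ \emph{whenever} $P(R_i)>0$. This costs nothing downstream, because in the swap argument the two regions to which the relation is actually applied (the one containing $S_{ij}$ in each of the two tie-breakings) have positive mass as soon as $P(S_{ij})>0$; one only needs a short case analysis on whether the strict parts $R_i\setminus S_{ij}$ and $R_j\setminus S_{ij}$ are $P$-null, and in every case both $a_i$ and $a_j$ come out equal to the centroid of $S_{ij}$, giving the contradiction $a_i=a_j$. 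After $(ii)$ is secured, $P(R_i)=P(M(a_i|\ga))>0$ holds automatically and $(iii)$, $(iv)$ follow as you say.
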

\begin{remark} For a Borel probability measure $P$ on $\D R^d$, an optimal set of $n$-means forms a centroidal Voronoi tessellation of $\D R^d$; however, the converse is not true in general \cite{DFG, GG, R2}.  Also, the optimal quantizers are the centroids of their own Voronoi regions \cite{GL1}.
\end{remark}

Let $C$ be the standard Cantor set generated by the similarity mappings $U_1(x)=\frac 13x$ and $U_2(x)=\frac 13 x +\frac 23, \ x\in \D R , $ and $P_c$ be the associated self-similar Borel probability measure on $\D R$.   It is well-known that $P_c=\frac 12 P_c\circ U_1^{-1}+\frac 12 P_c\circ U_2^{-1}$ with support the Cantor set $C$ \cite{H}. For this probability measure Graf and Luschgy determined the optimal sets of $n$-means and the $n$th quantization errors for all $n\geq 2$  \cite{GL2}. They also proved that the quantization dimension exists, but the quantization coefficient does not exist  \cite{GL2}. Let $S$ be a Cantor dust on $\D R^2$ which is generated by a set of $4^u$ similarity mappings $\{S_j : j=1, 2, 3, \cdots, 4^u \}$, where $u\in \D N$. Let $P$ be a Borel probability measure on $\D R^2$ such that
\[P=\frac 1{4^u} \sum_{j=1}^{4^u} P\circ S_j^{-1}.\]
Then, $P$ is unique and $P$ has support the Cantor dust $S$. It is extremely difficult to calculate the optimal sets of $n$-means and the $n$th quantization errors for all $n\in \D N$ and all $u\in\D N$. Even, they were not known for $u=1$. In this paper, we investigate them for $u=1$. The Cantor dust for $u=1$ is called the \tit{standard Cantor dust}, and it is generated by the maps
$S_1(x_1, x_2)=\frac 13(x_1, x_2)$, $S_2(x_1, x_2)=\frac 13(x_1, x_2) + (\frac 23, 0)$, $S_3(x_1, x_2)=\frac 13(x_1, x_2) +(0, \frac 23)$, and $S_4(x_1, x_2)=\frac 13(x_1, x_2)+(\frac 23, \frac 23), \ (x_1, x_2) \in \D R^2 . $  The associated self-similar Borel probability measure $P$ on $\D R^2$ is given by $P=\frac 1 4P\circ S_1^{-1}+\frac 1 4P\circ S_2^{-1}+\frac 1 4P\circ S_3^{-1}+\frac 1 4P\circ S_4^{-1}$.  Then, $P$ has support $S$ \cite{H}.
For this probability measure $P$ we have determined the optimal sets of $n$-means and the $n$th quantization errors for all $n\geq 1$.  The standard (as well as more general) Cantor dust $S$ satisfies the strong separation condition (SSC), i.e., $S_1(S), S_2(S), S_3(S)$, and $S_4(S)$ are pairwise disjoint; hence, the quantization dimension of the probability measure $P$ exists and equals the Hausdorff dimension of $S$ \cite{GL2}.  In this paper, we show that the quantization coefficient for $P$ with support the standard Cantor dust does not exist.

For the (standard) Cantor distribution $P_c, $ for any $n$-points, one can easily determine whether the $n$-points form a CVT \cite{GL2}; however, for the probability distribution $P$ supported by the Cantor dust considered in this paper, for $n$-points sometimes it is quite difficult to determine whether they form a CVT.  From the construction, the Cantor dust $S$ above can be viewed as $C \times C$ and the probability distribution $P$ can be considered as $P_c\times P_c$. Hence, it might be surmised that for any $n\geq 1, $ the optimal sets of $n$-means can be deduced as products of those of $C. $  This is far from the truth; as it will be clear from the arguments in the next sections that, for instance, while $\alpha=\{\frac{1}{18}, \frac{5}{18}, \frac{5}{6}\}$ is an optimal set of 3-means for $P_c, $ the set $ \alpha \times \alpha $ is not an optimal set of 9-means for $P.$  By the same token, there is no direct connection between $n$th quantization errors in the case $C$ and $S. $

The technique we utilized can be extended to determine the optimal sets and the corresponding quantization error for many other singular continuous probability measures on $\D R^2$, such as probability measures on more general Cantor dusts generated by $4^u $ self-similar maps, $u \geq 2. $

\section{Preliminaries}

Let $I:=\{1, 2, 3, 4\}  $ be an alphabet and, for $k\geq 1 , $ call any finite sequence $\gs:=\gs_1\gs_2\cdots \gs_k , $ where $ \gs_i \in I, $ a \textit{word} $\sigma$ over $I $ with length $k. $
A word of length zero is called the \textit{empty word}, and is denoted by $\emptyset$.  By $I^*$ we denote the set of all words
over $I$ of some finite length $k$ including the empty word $\emptyset$. For any two words $\gs:=\gs_1\gs_2\cdots \gs_k$ and
$\tau:=\tau_1\tau_2\cdots \tau_\ell$ in $I^*$, by
$\gs\tau:=\gs_1\cdots \gs_k\tau_1\cdots \tau_\ell$, we mean the word obtained from the concatenation of the two words $\gs$ and $\tau$. For $\gs, \gt\in I^\ast$, $\gs$ is called {\it an extension of} $\gt$ if $\gs=\gt x$ for some word $x\in I^\ast$. The maps $S_i :\D R^2 \to \D R^2,\ 1\leq i \leq 4, $ defined above, are the generating maps of the standard Cantor dust $S . $
For $\gs:=\gs_1\gs_2 \cdots\gs_k \in I^k$, set $S_\gs:=S_{\gs_1}\circ \cdots \circ S_{\gs_k}$
and $J_\gs:=S_{\gs}([0, 1]\times [0, 1])$.  If $\emptyset , \ S_{\emptyset}=id , $ the identity map on $\D R^2$, and we will write $J:=J_{\emptyset}=S_{\emptyset}([0,1]\times [0, 1])=[0, 1]\times [0, 1]$.  Hence, $S:=\cap_{k \in \D N} \cup_{\gs \in I^k} J_\gs , $ and
the elements of the set $\{J_\gs : \gs \in I^k \}$ are the $4^k$ squares in the $k$th level in the construction of $S, $ which will be called {\it basic squares at the $k$th level.}  The squares $J_{\gs 1}$, $J_{\gs 2}$, $J_{\gs 3}$ and $J_{\gs 4}$ into which $J_\gs$ is split up at the $(k+1)$th level are called the {\it children of $J_\gs$.}  The point of intersection of the two diagonals of $J_{\gs}$ will be called the {\it center} of $J_\gs .$

A few observations on the $S$ and self-similar probability measure $P$ are in order.  Clearly, $S$ has four axes of symmetry: the lines $h \ (x_2=\frac{1}{2}),\ v \ (x_1=\frac{1}{2}),\ r \ (x_1=x_2 ) $ and $l \ (x_1+x_2=1). $  Furthermore, by the following statement the marginal distributions of $P$ are $P_c; $ since the proof is straightforward, it will not be given.

\begin{prop}  \label{prop111}
Let $P_1, P_2$ be the marginal distributions of $P$, i.e., $P_1(A)=P(A\times \D R)$ for all $A \in \F B$, and $P_2(B)=P(\D R \times B), \ \forall \ B \in \F B$, where $\F B$ is the Borel $\gs$-algebra on $\D R$. Then, $P_1=P_2=P_c$, where $P_c$ is the Cantor distribution associated with standard Cantor Set.
\end{prop}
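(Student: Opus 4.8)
The plan is to realize each marginal as a pushforward of $P$ under a coordinate projection and to exploit the way the generating maps $S_i$ intertwine with the Cantor maps $U_1,U_2$. Write $\pi_1(x_1,x_2)=x_1$ and $\pi_2(x_1,x_2)=x_2$ for the two coordinate projections $\pi_1,\pi_2:\D R^2\to\D R$, so that by definition $P_1=P\circ \pi_1^{-1}$ and $P_2=P\circ\pi_2^{-1}$. The crucial elementary observation is that the projections commute with the $S_i$ up to the Cantor maps: reading off the first coordinate in the explicit formulas gives $\pi_1\circ S_i=U_{j(i)}\circ\pi_1$, where $j(1)=j(3)=1$ and $j(2)=j(4)=2$, since the first coordinate of $S_1,S_3$ is $\tfrac13 x_1=U_1(x_1)$ while that of $S_2,S_4$ is $\tfrac13 x_1+\tfrac23=U_2(x_1)$. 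This is a one-line check and requires no computation beyond substitution.

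Next I would push the self-similar identity $P=\tfrac14\sum_{i=1}^4 P\circ S_i^{-1}$ forward under $\pi_1$. Using the general identity $(P\circ S_i^{-1})\circ\pi_1^{-1}=P\circ(\pi_1\circ S_i)^{-1}$ together with the intertwining relation, each term becomes
\[
(P\circ S_i^{-1})\circ\pi_1^{-1}=P\circ(U_{j(i)}\circ\pi_1)^{-1}=(P\circ\pi_1^{-1})\circ U_{j(i)}^{-1}=P_1\circ U_{j(i)}^{-1}.
\]
Summing over the four maps, the two indices $i=1,3$ contribute $U_1$ and the two indices $i=2,4$ contribute $U_2$, so the four terms collapse to
\[
P_1=\tfrac14\big(P_1\circ U_1^{-1}+P_1\circ U_2^{-1}+P_1\circ U_1^{-1}+P_1\circ U_2^{-1}\big)=\tfrac12\,P_1\circ U_1^{-1}+\tfrac12\,P_1\circ U_2^{-1}.
\]
Thus $P_1$ satisfies exactly the self-similarity equation that defines the Cantor distribution $P_c$.

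The conclusion then follows from the uniqueness of the invariant measure. Since $U_1,U_2$ are contractions, Hutchinson's theorem (invoked in the excerpt via \cite{H}) guarantees a unique Borel probability measure $\nu$ on $\D R$ with $\nu=\tfrac12\,\nu\circ U_1^{-1}+\tfrac12\,\nu\circ U_2^{-1}$, and $P_c$ is by definition that measure; hence $P_1=P_c$. The argument for $P_2$ is verbatim the same with $\pi_2$ in place of $\pi_1$: now the second coordinate of $S_1,S_2$ is $\tfrac13 x_2=U_1(x_2)$ and that of $S_3,S_4$ is $\tfrac13 x_2+\tfrac23=U_2(x_2)$, so $\pi_2\circ S_i=U_{j'(i)}\circ\pi_2$ with $j'(1)=j'(2)=1$, $j'(3)=j'(4)=2$, yielding the identical Cantor recursion and hence $P_2=P_c$.

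The entire argument is essentially bookkeeping with pushforward measures; the only genuinely non-routine ingredient is the appeal to uniqueness of the self-similar measure for the Cantor IFS, and I would state explicitly that the contractivity of $U_1,U_2$ is what licenses that appeal. A more self-contained alternative would be to use Lemma \ref{lemma1} to verify directly that $\int g\,dP_1=\int g\,dP_c$ for every bounded Borel $g:\D R\to\D R$, by showing both sides obey the same averaging recursion under the two-branch Cantor dynamics; but the uniqueness route above is shorter and cleaner, so I expect that to be the intended proof.
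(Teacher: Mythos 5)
Your proof is correct and is essentially the paper's own argument: both derive the one-dimensional self-similarity $P_1=\tfrac12\,P_1\circ U_1^{-1}+\tfrac12\,P_1\circ U_2^{-1}$ from $P=\tfrac14\sum_{i=1}^4 P\circ S_i^{-1}$ by observing that the horizontal (resp.\ vertical) components of the maps $S_i$ are exactly $U_1$ or $U_2$, and then conclude by uniqueness of the probability measure invariant under the Cantor iterated function system. Your intertwining formulation $\pi_1\circ S_i=U_{j(i)}\circ\pi_1$ is simply a cleaner packaging of the set-level computation the paper carries out with the component maps $S_{(i1)}, S_{(i2)}$.
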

\medskip

Since $P =\frac 1 4 P \circ S_1^{-1} + \frac 1 4 P\circ S_2^{-1} +\frac 1 4  P\circ S_3^{-1}+\frac 1 4  P\circ S_4^{-1}$, we have, by induction, that
$P=\sum_{\sigma \in I^k} \frac 1 {4^k} P\circ S_\sigma^{-1} . $  Hence, it follows that

\begin{lemma} \label{lemma1} Let $f: \D R\times \D R \to \D R^+\times \D R^+$ be Borel measurable and $k\in \D N$. Then,
\[\int f \,dP=\frac 1 {4^k} \sum_{\gs \in I^k}\int f\circ S_\gs \,dP.\]
\end{lemma}
\medskip

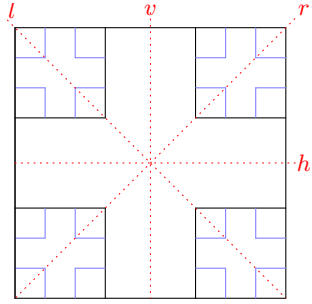
\begin{figure}
\noindent \begin{tikzpicture}[line cap=round,line join=round,>=triangle 45,x=0.4 cm,y=0.4 cm]
\clip(-0.3,-0.3) rectangle (9.8,9.8);
\draw (0.,9.)-- (9.,9.);
\draw (9.,9.)-- (9.,0.);
\draw (0.,3.)-- (3.,3.);
\draw (3.,3.)-- (3.,0.);
\draw (0.,6.)-- (3.,6.);
\draw (3.,9.)-- (3.,6.);
\draw (6.,9.)-- (6.,6.);
\draw (6.,6.)-- (9.,6.);
\draw (6.,3.)-- (6.,0.);
\draw (6.,3.)-- (9.,3.);
\draw (0.,0.)-- (0.,9.);
\draw (0.,0.)-- (9.,0.);
\draw [color=xdxdff]  (1.,3.)-- (1.,2.);
\draw [color=xdxdff]  (1.,2.)-- (0.,2.);
\draw [color=xdxdff]  (1.,1.)-- (0.,1.);
\draw [color=xdxdff]  (1.,1.)-- (1.,0.);
\draw [color=xdxdff]  (2.,3.)-- (2.,2.);
\draw [color=xdxdff]  (2.,2.)-- (3.,2.);
\draw [color=xdxdff]  (2.,1.)-- (3.,1.);
\draw [color=xdxdff]  (2.,1.)-- (2.,0.);
\draw [color=xdxdff]  (7.,3.)-- (7.,2.);
\draw [color=xdxdff]  (6.,2.)-- (7.,2.);
\draw [color=xdxdff]  (8.,3.)-- (8.,2.);
\draw [color=xdxdff]  (8.,2.)-- (9.,2.);
\draw [color=xdxdff]  (6.,1.)-- (7.,1.);
\draw [color=xdxdff]  (7.,1.)-- (7.,0.);
\draw [color=xdxdff]  (8.,1.)-- (9.,1.);
\draw [color=xdxdff]  (8.,1.)-- (8.,0.);
\draw [color=xdxdff]  (1.,9.)-- (1.,8.);
\draw [color=xdxdff]  (0.,8.)-- (1.,8.);
\draw [color=xdxdff]  (1.,7.)-- (0.,7.);
\draw [color=xdxdff]  (1.,7.)-- (1.,6.);
\draw [color=xdxdff]  (2.,9.)-- (2.,8.);
\draw [color=xdxdff]  (3.,8.)-- (2.,8.);
\draw [color=xdxdff]  (3.,7.)-- (2.,7.);
\draw [color=xdxdff]  (2.,6.)-- (2.,7.);
\draw [color=xdxdff]  (7.,9.)-- (7.,8.);
\draw [color=xdxdff]  (7.,8.)-- (6.,8.);
\draw [color=xdxdff]  (8.,9.)-- (8.,8.);
\draw [color=xdxdff]  (9.,8.)-- (8.,8.);
\draw [color=xdxdff]  (8.,7.)-- (9.,7.);
\draw [color=xdxdff]  (8.,7.)-- (8.,6.);
\draw [color=xdxdff]  (7.,7.)-- (6.,7.);
\draw [color=xdxdff]  (7.,7.)-- (7.,6.);
\draw (0.,0.)-- (0.,9.);
\draw [dotted,color=ffqqqq] (9.3,9.3)-- (0.,0.);
\draw [dotted,color=ffqqqq] (-0.4,9.4)-- (9.,0.);
\draw [dotted,color=ffqqqq] (0.,4.5)-- (9.4,4.5);
\draw [dotted,color=ffqqqq] (4.5,9.5)-- (4.5,0);
\begin{scriptsize}
\draw[color=ffqqqq] (-0.1, 9.6) node {$l$};
\draw[color=ffqqqq] (9.6,9.6) node {$r$};
\draw[color=ffqqqq] (9.6,4.5) node {$h$};
\draw[color=ffqqqq] (4.5,9.6) node {$v$};
\end{scriptsize}
\end{tikzpicture}
\caption{Symmetry axes of the Cantor dust.} \label{Fig1}
\end{figure}

\begin{remark} \label{rem1}  With respect to the horizontal and vertical lines $h$ and $v$, and the diagonals $r$ and $l$, the Cantor dust $S$ has \tit{maximum symmetry}, i.e., with respect to the four lines, the Cantor dust is geometrically symmetric as well as symmetric with respect to the probability distribution $P$ (see Figure~\ref{Fig1}). By the symmetry with respect to $P$, it is meant that if the two basic squares of similar geometrical shape lie in the opposite sides and are equidistant with respect to any of the lines $h$, $v$, $r$, and $l$, then they have the same probability. Notice that the Cantor dust has four lines of maximum symmetry. Due to this fact, we conjecture that among all the pairs of points whose Voronoi regions have the boundaries as oblique lines passing through the point $(\frac 12, \frac 12)$, the pair of points whose Voronoi regions have the boundary as any of the two diagonals will give the smallest distortion error.
\end{remark}
\medskip

\begin{notation}  As usual, let $X=(X_1, X_2)$ be a random vector with distribution $P, \ E(X)$ and $V(X)$ will denote the expected vector and the expected squared Euclidean distance of $X , $ respectively.
For words $\gb, \gg, \cdots, \gd \in I^\ast $, by $a(\gb, \gg, \cdots, \gd)$ we will mean the conditional expectation of the random variable $X$ given $J_\gb\uu J_\gg \uu\cdots \uu J_\gd,$ i.e.,
\begin{equation} \label{eq000}a(\gb, \gg, \cdots, \gd)=E(X|X\in J_\gb \uu J_\gg \uu \cdots \uu J_\gd)=\frac{1}{P(J_\gb\uu \cdots \uu J_\gd)}\int_{J_\gb\uu \cdots \uu J_\gd} x dP,
\end{equation}
where $\int x dP =\int(x_1, x_2) dP$.
\end{notation}

\medskip

\begin{lemma} \label{lemma333}
Let $X:=(X_1, X_2)$ be a random vector with distribution $P.$  Then, $E(X)=(E(X_1), \, E(X_2))=(\frac 12, \frac 12) \te{ and } V:=V(X)=E\|X-(\frac 1 2, \frac 1 2)\|^2=\frac 1 4.$
\end{lemma}
\begin{proof} Since $P_1$ and $P_2$ are the marginal distributions of $X:=(X_1, X_2)$, the random variables $X_1$ and $X_2$ have distributions $P_1$ and $P_2$, respectively.  Since, by Proposition~\ref{prop111}, $P_1=P_2=P_c$, both $X_1$ and $X_2$ are $P_c$-distributed random variables. Thus, by Lemma~3.4  \cite{GL2}, we obtain that $E(X_1)=E(X_2)=\frac 12$, and $V(X_1)=V(X_2)=\frac 18 . $
Consequently, it follows that $E\|X-(\frac 12, \frac 12)\|^2=E(X_1-\frac 12)^2 +E(X_2-\frac 12)^2=V(X_1)+V(X_2)=\frac 14. $
\end{proof}
\medskip

\begin{remark} \label{rem11}  For any $(a, b) \in \D R^2$, \begin{align*} E\|X-(a, b)\|^2 &=\iint_{\D R^2} [(x_1-a)^2+(x_2-b)^2]\, dP(x_1, x_2)\\
&=\int_{0}^1 (x_1-a)^2 \,dP_1(x_1)+\int_0^1(x_2-b)^2 \,dP_2(x_2)=E(X_1-a)^2 +E(X_2-b)^2\\
&=V(X_1)+V(X_2)+(a-\frac 1 2)^2+(b-\frac 12)^2=V+\|(a, b)-(\frac 12, \frac 12)\|^2.
\end{align*}
Hence, by Lemma \ref{lemma1} , for any $\gs \in I^k$, $k\geq 1$, we have
\[\int_{J_\gs}\|x-(a, b)\|^2 dP= \frac{1}{4^k} \int\|x -(a, b)\|^2 dP\circ S_\gs^{-1},\]
which implies
\begin{equation} \label{eq1}
\int_{J_\gs}\|x-(a, b)\|^2 dP=\frac{1}{4^k}\Big(\frac{1}{9^k}V+\|S_\gs(\frac 12, \frac 12)-(a, b)\|^2\Big).
\end{equation}

\end{remark}

\bigskip

\section{Optimal sets of $n$-means and the quantization errors for all $n\geq 2$}

In this section, we determine the optimal sets of $n$-means, $n\geq 1. $
First, we will determine optimal sets of $n$-means for $n=1,2,3 , $ which will form the base in determining optimal sets of $n$-means for $ n\geq 4.$
To determine the quantization error, we will frequently use the relation \eqref{eq1}.  Below, by an {\it oblique line} will mean any line which is neither horizontal nor vertical.
\begin{figure}
\begin{tikzpicture}[line cap=round,line join=round,>=triangle 45,x=0.6cm,y=0.6cm]
\clip(-0.3,-1.18) rectangle (6.6,6.6);
\draw (0.,6.)-- (6.,6.);
\draw (6.,6.)-- (6.,0.);
\draw (0.,6.)-- (0.,0.);
\draw (2.,6.)-- (2.,4.);
\draw (0.,4.)-- (2.,4.);
\draw (4.,6.)-- (4.,4.);
\draw (4.,2.)-- (6.,2.);
\draw (4.,2.)-- (4.,0.);
\draw (0.,2.)-- (2.,2.);
\draw (2.,2.)-- (2.,0.);
\draw (4.,4.)-- (6.,4.);
\draw [dotted,color=ffqqqq] (3.,5.)-- (1.,1.);
\draw [dotted,color=ffqqqq] (1.,1.)-- (5.,1.);
\draw [dotted,color=ffqqqq] (5.,1.)-- (3.,5.);
\draw [dotted,color=ffqqqq] (0.,4.)-- (3.,2.5);
\draw [dotted,color=ffqqqq] (6.,4.)-- (3.,2.5);
\draw (0.,0.)-- (6.,0.);
\draw [dotted,color=ffqqqq] (3.,2.5)-- (3.,0.);
\begin{scriptsize}
\draw [fill=xdxdff] (0.,6.) circle (1.5pt);
\draw[color=xdxdff] (0.08192582624720256,6.362937364327278) node {$C$};
\draw [fill=qqqqff] (6.,6.) circle (1.5pt);
\draw[color=qqqqff] (6.130285699666685,6.307192112037698) node {$B$};
\draw [fill=xdxdff] (0.,4.) circle (1.5pt);
\draw[color=xdxdff] (0.19341633082636353,4.383980908047175) node {$U$};
\draw [fill=xdxdff] (6.,4.) circle (1.5pt);
\draw[color=xdxdff] (6.186030951956266,4.383980908047175) node {$T$};
\draw [fill=qqqqff] (1.,1.) circle (1.5pt);
\draw[color=qqqqff] (0.8523593583013294,0.7026668830796629) node {$P$};
\draw [fill=qqqqff] (5.,1.) circle (1.5pt);
\draw[color=qqqqff] (5.210489036888608, 0.7026668830796629) node {$Q$};
\draw [fill=qqqqff] (3.,5.) circle (1.5pt);
\draw[color=qqqqff] (3.20365995446371,5.387395449259622) node {$R$};
\draw [fill=qqqqff] (3.,2.5) circle (1.5pt);
\draw[color=qqqqff] (3.20365995446371,2.8788590962285068) node {$W$};
\draw [fill=qqqqff] (3.,0.) circle (1.5pt);
\draw[color=qqqqff] (3.1479147021741296,0.28670486476302004) node {$S$};
\draw [fill=qqqqff] (0.,0.) circle (1.5pt);
\draw[color=qqqqff] (0.20767107853678304,0.28670486476302004) node {$O$};
\draw [fill=qqqqff] (6.,0.) circle (1.5pt);
\draw[color=qqqqff] (6.186030951956266,0.3145774909078102) node {$A$};
\draw[] (3.1479147021741296,-0.910486476302004) node { $(a)$ };
\end{scriptsize}
\end{tikzpicture}  \qquad
\begin{tikzpicture}[line cap=round,line join=round,>=triangle 45,x=0.6 cm,y=0.6cm]
\clip(-0.4,-1.18) rectangle (6.6,6.6);
\draw (0.,6.)-- (6.,6.);
\draw (6.,6.)-- (6.,0.);
\draw (0.,6.)-- (0.,0.);
\draw (2.,6.)-- (2.,4.);
\draw (0.,4.)-- (2.,4.);
\draw (4.,6.)-- (4.,4.);
\draw (4.,2.)-- (6.,2.);
\draw (4.,2.)-- (4.,0.);
\draw (0.,2.)-- (2.,2.);
\draw (2.,2.)-- (2.,0.);
\draw (4.,4.)-- (6.,4.);
\draw (0.,0.)-- (6.,0.);
\draw [dotted,color=ffqqqq] (5.,5.)-- (0.8666666666666667,3.8);
\draw [dotted,color=ffqqqq] (5.,5.)-- (3.8,0.8666666666666667);
\draw [dotted,color=ffqqqq] (3.8,0.8666666666666667)-- (0.8666666666666667,3.8);
\draw [dotted,color=ffqqqq] (2.4570987471122963,6.001453400434421)-- (3.2439764417261774,3.247381469285843);
\draw [dotted,color=ffqqqq] (3.2439764417261774,3.247381469285843)-- (0.,0.);
\draw [dotted,color=ffqqqq] (3.2439764417261774,3.247381469285843)-- (6.0177203152401075,2.4801757170373104);
\begin{scriptsize}
\draw [fill=qqqqff] (0.8666666666666667,3.8) circle (1.5pt);
\draw[color=qqqqff] (0.6151951115024872,3.568781663650882) node {$P_1$};
\draw [fill=qqqqff] (3.8,0.8666666666666667) circle (1.5pt);
\draw[color=qqqqff] (3.678647680230883,0.5100625160755971) node {$Q_1$};
\draw [fill=qqqqff] (5.,5.) circle (1.5pt);
\draw[color=qqqqff] (5.171098931662666,5.375433178541985) node {$R_1$};
\draw [fill=qqqqff] (3.2439764417261774,3.247381469285843) circle (1.5pt);
\draw[color=qqqqff] (3.6000976143660526,3.2807647554798365) node {$W$};
\draw [fill=qqqqff] (2.4570987471122963,6.001453400434421) circle (1.5pt);
\draw[color=qqqqff] (2.6313134686998074,6.300400679496505) node {$S$};
\draw [fill=qqqqff] (6.0177203152401075,2.4801757170373104) circle (1.5pt);
\draw[color=qqqqff] (6.192249787905464,2.83564771557913) node {$T$};
\draw [fill=uuuuuu] (0.,0.) circle (1.5pt);
\draw[color=uuuuuu] (0.20767107853678304,0.28670486476302004) node {$O$};
\draw [fill=uuuuuu] (1.99790290879475,2.) circle (1.5pt);
\draw[color=uuuuuu] (2.1861964287991005,2.3043473203901464) node {$B$};
\draw [fill=uuuuuu] (0.,2.) circle (1.5pt);
\draw[color=uuuuuu] (0.196261426890057,2.3043473203901464) node {$C$};
\draw [fill=uuuuuu] (2.,0.) circle (1.5pt);
\draw[color=uuuuuu] (2.1861964287991005,0.27441231848110545) node {$A$};
\draw[] (3.1479147021741296,-0.910486476302004) node { $(b)$ };
\end{scriptsize}
\end{tikzpicture}
\caption{$(a)$ CVT with three-means $P$, $Q$ and $R$ with one on the vertical line of the symmetry; $(b)$ CVT with three-means $P_1$, $Q_1$ and $R_1$ with one on a diagonal of the square.} \label{Fig2}
\end{figure}
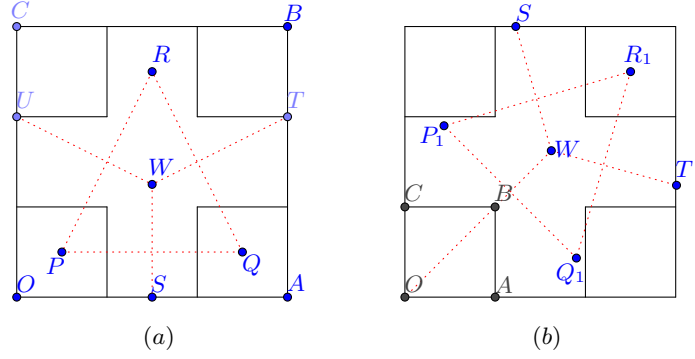

First, from Lemma~\ref{lemma333}  it follows that the optimal set of one-mean is $E(X)=\{(\frac{1}{2}, \frac{1}{2})\}$ and the corresponding quantization error is $V (X)=\frac{1}{4}$ of the random variable $X . $
For any word $\sigma \in I^k$, $k\geq 1$, since $a(\gs)=E(X | X \in J_\sigma)$, using Lemma~\ref{lemma1}, we have
\begin{align*}
&a(\gs)=\frac{1}{P(J_\sigma)} \int_{J_\sigma} x \,dP(x)=\int_{J_\sigma} x\, dP\circ S_\sigma^{-1}(x)=\int S_\sigma(x)\, dP(x)=E(S_\sigma(X)).
\end{align*}
Since $S_i$ are similarity mappings, it follows that $E(S_j(X))=S_j(E(X))$ for $j=1, 2, 3, 4; $ and so, by induction,
$$a(\gs)=E(S_\sigma(X))=S_\sigma(E(X))=S_\sigma(\frac 12, \frac 12),\
\text{for}\ \sigma\in I^k,\ k\geq 1 . $$
From this observation we surmise that the sets $\set{(\frac 1 6, \frac 12), (\frac 56, \frac 12)}$ and $\set{(\frac 12, \frac 16), (\frac 12, \frac 56)}$ form two different optimal sets of two-means.  In order to prove this fact, first we will show that an optimal set of two-means cannot lie on an oblique line.

\begin{lemma} \label{lemma301} The points in an optimal set of two-means cannot lie on a oblique line of $S.$
\end{lemma}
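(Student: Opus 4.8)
The plan is to first pin down an explicit upper bound for $V_2$ and then argue by contradiction. Taking the benchmark set $\gb=\{(\tfrac16,\tfrac12),(\tfrac56,\tfrac12)\}$, whose Voronoi regions are the half planes $x_1<\tfrac12$ and $x_1>\tfrac12$ meeting $S$ in $J_1\uu J_3$ and $J_2\uu J_4$, I would evaluate the distortion directly from \eqref{eq1}: with $V=\tfrac14$, each of the four first level squares contributes $\frac14\left(\frac19 V+\frac19\right)=\frac{5}{144}$ (the squared distance from each center $S_i(\tfrac12,\tfrac12)$ to its nearest benchmark point is $\tfrac19$), so $V(P;\gb)=\frac{5}{36}$ and hence $V_2\le\frac5{36}$. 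Now suppose, for contradiction, that an optimal set $\ga=\{p_1,p_2\}$ has $p_1,p_2$ on an oblique line $\ell$. By Proposition~\ref{prop10} each $p_i$ is the centroid of its Voronoi region, and the common boundary of the two regions is the perpendicular bisector $M$ of $\overline{p_1p_2}$; since $\ell$ is oblique and $M\perp\ell$, the dividing line $M$ is oblique as well.

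The second step is to recast the error as a one dimensional quantity. Writing $w_i=P(M(p_i|\ga))$ and using that $p_1,p_2$ are centroids, the usual variance decomposition gives $V(P;\ga)=V-w_1w_2\|p_1-p_2\|^2$, so optimality together with $V_2\le\frac5{36}=\frac14-\frac19$ forces $w_1w_2\|p_1-p_2\|^2\ge\frac19$. Let $n$ be a unit vector along $\ell$ and set $Y=\langle X,n\rangle$. Because $p_1-p_2$ is parallel to $n$, one has $\|p_1-p_2\|=|\langle p_1-p_2,n\rangle|$, and $\langle p_i,n\rangle$ is exactly the conditional mean of $Y$ on the side of $M$ containing $p_i$; thus $w_1w_2\|p_1-p_2\|^2$ equals the between-class variance of $Y$ for the threshold partition induced by $M$. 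Here I would first record that $P=P_c\otimes P_c$ (by the uniqueness in Proposition~\ref{prop111}, since $P_c\otimes P_c$ satisfies the same self-similar identity), so $X_1,X_2$ are independent; combined with $V(X_1)=V(X_2)=\frac18$ this yields $\operatorname{Var}(Y)=\frac18$ for \emph{every} direction $n$. Consequently $w_1w_2\|p_1-p_2\|^2\le\operatorname{Var}(Y)-V_2^{(1)}(Y)$, where $V_2^{(1)}(Y)$ denotes the optimal two-means error of the real random variable $Y$, and the contradiction will follow once I show $V_2^{(1)}(Y)>\frac1{72}$ for every oblique $n$.

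For the final estimate I would exploit self-similarity at the first level. Projecting $P|_{J_i}$ onto $n$ produces a scaled copy of $Y$, so each first level square gives a cluster of projected variance $\frac19\operatorname{Var}(Y)=\frac1{72}$, centered at $\langle S_i(\tfrac12,\tfrac12),n\rangle$. For an oblique $n$ these four centers are pairwise distinct (they coincide in pairs exactly when $\ell$ is horizontal, vertical, or diagonal), so any placement of two representatives must leave at least one cluster served by a point different from its mean, and the within-class error strictly exceeds $\frac1{72}$. The main obstacle is the case in which the optimal threshold for $Y$ falls inside the projection of one of the four squares, splitting it; here the clean cluster bound fails and I would instead descend one level via the self-similar identity \eqref{eq1}, bounding the split square's contribution by a rescaled copy of the same two-means problem and iterating. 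A secondary point to dispatch is that the two diagonals $x_2=x_1$ and $x_2=1-x_1$ are genuine oblique symmetry axes that \emph{do} support a centroidal pair, namely $\{(\tfrac{3}{10},\tfrac{3}{10}),(\tfrac{7}{10},\tfrac{7}{10})\}$ and its reflection; for these I would simply compute the error, $\frac14-\frac{2}{25}=\frac{17}{100}>\frac5{36}$, to confirm directly that they are not optimal.
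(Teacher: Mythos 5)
Your reduction is correct and genuinely different from the paper's argument: the paper never projects to one dimension, but instead argues by symmetry that the best oblique configuration is the one whose Voronoi boundary is a diagonal, computes the centroids $(\frac{7}{10},\frac{3}{10})$ and $(\frac{3}{10},\frac{7}{10})$ of the two halves, and lower-bounds that configuration's distortion by $0.1494\ldots>\frac{5}{36}$. Your chain --- the identity $V(P;\alpha)=V-w_1w_2\|p_1-p_2\|^2$ for a centroidal pair, the product structure $P=P_c\otimes P_c$ via uniqueness of the invariant measure, $\operatorname{Var}\langle X,n\rangle=\frac18$ in every direction, and hence a contradiction as soon as the scalar two-means error satisfies $V_2^{(1)}(Y)>\frac{1}{72}$ --- is valid, and your exact value $\frac{17}{100}$ for the diagonal CVT is correct (and sharper than the paper's lower bound). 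However, the proof of the key inequality $V_2^{(1)}(Y)>\frac{1}{72}$ for every oblique direction is not complete, and that inequality is the entire substance of the lemma in your formulation, not a technical afterthought.

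Concretely, the four-cluster argument is clean only when the optimal threshold splits no cluster. In the split case, the repair you propose --- bound the split square's contribution by a rescaled copy of the same two-means problem and iterate --- is quantitatively insufficient as stated. If one cluster is split and its contribution is bounded below by its own optimal two-means error $\frac19V_2^{(1)}(Y)$, you only get
\[
V_2^{(1)}(Y)\ \ge\ \frac14\Bigl(3\cdot\frac{1}{72}+\frac19V_2^{(1)}(Y)\Bigr),
\]
which solves to $V_2^{(1)}(Y)\ge\frac{3}{280}<\frac{1}{72}$; and two clusters can be split simultaneously (for $\frac13<\tan\theta<3$ the projections of $J_2$ and $J_3$ overlap), degrading the bound further to $\frac{1}{136}$. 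So the iteration cannot close unless you also extract the mismatch terms $(m_i-c_i)^2$ of the unsplit clusters, and bounding those from below requires locating the two conditional means relative to the cluster means uniformly in the direction and in the threshold --- precisely the quantitative work that is missing. Second, your treatment of the diagonals is also incomplete: ruling out the single symmetric pair $\{(\frac{3}{10},\frac{3}{10}),(\frac{7}{10},\frac{7}{10})\}$ does not rule out an optimal pair on the diagonal, because you have not shown it is the only centroidal pair on that line; note that the projection of $P$ onto the diagonal has full support (since $C+C=[0,2]$), so no gap structure forces the threshold to the center. The natural repair is to observe that your cluster argument needs only three distinct projected centers, hence applies to the diagonals as well --- but then it inherits exactly the same split-cluster gap.
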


\begin{proof}  The boundary of the Voronoi regions generated by the points in $\gb:=\set{(\frac 1 6, \frac 12), (\frac 56, \frac 12)}$ is the line $ v ; $ i.e., $J_1\uu J_3\sci M((\frac 16, \frac 12)|\gb)$ and $J_2\uu J_4\sci M((\frac 56, \frac 12)|\gb)$.
Let $V_{2,1}$ be the distortion error due to the set $\gb$. Then,
\begin{align} \label{eq4}
V_{2, 1}=\mathop{\int}\limits_{J_1\uu J_3} \|x-(\frac 16, \frac 12)\|^2 dP+\mathop{\int}\limits_{J_3\uu J_4} \|x-(\frac 56, \frac 12)\|^2  dP=\frac 5{36}.\notag \end{align}
Among all the oblique lines through the point $(\frac 12, \frac 12)$, as mentioned in Remark~\ref{rem1}, the pair of points whose Voronoi regions have the boundary as any of the two diagonals will give the smallest distortion error.

We also know that the points which give the smallest distortion error are the centroids of their own Voronoi regions. Let $(a_1, b_1)$ and $(a_2, b_2)$ be the centroids of the left half and the right half of the Cantor dust with respect to the diagonal passing through the origin.
Thus, using \eqref{eq000}, we have
\begin{align*}
&(a_1, a_2)=E\left(X| X\in J_2\uu(J_{12}\uu J_{42})\uu (J_{112}\uu J_{142}\uu J_{412}\uu J_{442})\uu \cdots\right)\\
&=2\Big(\frac 14S_2(\frac 1 2, \frac 12)+ \frac{1}{4^2} \Big(S_{12}(\frac 1 2, \frac 12)+S_{42}(\frac 1 2, \frac 12) \Big)+\frac{1}{4^3} \Big(S_{112}(\frac 1 2, \frac 12)+S_{142}(\frac 1 2, \frac 12) \notag\\
&\qquad \qquad +S_{412}(\frac 1 2, \frac 12)+S_{442}(\frac 1 2, \frac 12) \Big)+\cdots\Big)\notag\\
&=2\Big((\frac{5}{24}, \frac{1}{24})+(\frac{11}{144},\frac{7}{144})+(\frac{29}{864}, \frac{25}{864})+(\frac{83}{5184}, \frac{79}{5184})+\cdots\Big)\notag\\
&=2\Big(\frac{5}{24}+\frac{11}{144}+\frac{29}{864}+\frac{83}{5184}+\cdots, \, \frac{1}{24}+\frac{7}{144}+\frac{25}{864}+\frac{79}{5184}+\cdots\Big)=2\Big(\frac{7}{20}, \frac{3}{20}\Big),\notag
\end{align*}
i.e, \begin{equation} \label{eq67} (a_1, a_2)=\Big(\frac{7}{10}, \frac{3}{10}\Big). \end{equation} Similarly, one can show that
\begin{equation} \label{eq68} (a_2, b_2)=E\left(X| X\in J_3\uu(J_{13}\uu J_{43})\uu (J_{113}\uu J_{143}\uu J_{413}\uu J_{443})\uu \cdots\right)=\Big(\frac 3{10}, \frac 7{10}\Big).
\end{equation}
Hence, if  $V_{2,2}$ is the distortion error due to the points $(\frac{7}{10}, \frac{3}{10})$ and $(\frac{3}{10}, \frac{7}{10})$, then we have
\begin{align*} V_{2,2}&=2\Big(\te{distortion error due to } (\frac{7}{10}, \frac{3}{10}))>2 \Big(\int_{J_2}\|(x_1, x_2)-(\frac{7}{10}, \frac{3}{10})\|^2 dP\\
& \qquad +\int_{J_{12}\uu J_{42}}\|(x_1, x_2)-(\frac{7}{10}, \frac{3}{10})\|^2 dP+\int_{J_{112}\uu J_{412}\uu J_{142}\uu J_{442}}\|(x_1, x_2)-(\frac{7}{10}, \frac{3}{10})\|^2 dP\\
& \qquad  +\int_{J_{1112}\uu J_{1412}\uu J_{1142}\uu J_{1442}\uu J_{4112}\uu J_{4412}\uu J_{4142}\uu J_{4442}}\|(x_1, x_2)-(\frac{7}{10}, \frac{3}{10})\|^2 dP\Big)\\
&=2 (0.0747492)=0.1494984> \frac{5}{36} =V_{2,1}.
\end{align*}
Therefore, the points in an optimal set of two-means cannot lie on a oblique line of the Cantor dust $S. $
\end{proof}
\begin{figure}
\begin{tikzpicture}[line cap=round,line join=round,>=triangle 45,x=0.4cm,y=0.4cm]
\clip(-0.2,-0.2) rectangle (6.9,6.9);
\draw (0.,6.6)-- (6.6,6.6);
\draw (6.6,6.6)-- (6.6,0.);
\draw (0.,2.2)-- (2.2,2.2);
\draw (2.2,2.2)-- (2.2,0.);
\draw (0.,4.4)-- (2.2,4.4);
\draw (2.2,6.6)-- (2.2,4.4);
\draw (4.4,6.6)-- (4.4,4.4);
\draw (4.4,4.4)-- (6.6,4.4);
\draw (4.4,2.2)-- (4.4,0.);
\draw (4.4,2.2)-- (6.6,2.2);
\draw (0.,0.)-- (6.6,0.014820793092255258);
\draw (0.,2.2)-- (0.,0.);
\draw (0.,6.6)-- (0.,0.);
\begin{scriptsize}
\draw [fill=ffqqqq] (3.3,1.1) circle (1.5pt);
\draw [fill=ffqqqq] (3.3,5.3) circle (1.5pt);
\end{scriptsize}
\end{tikzpicture}\qquad
\begin{tikzpicture}[line cap=round,line join=round,>=triangle 45,x=0.4cm,y=0.4cm]
\clip(-0.2,-0.2) rectangle (6.9,6.9);
\draw (0.,6.6)-- (6.6,6.6);
\draw (6.6,6.6)-- (6.6,0.);
\draw (0.,2.2)-- (2.2,2.2);
\draw (2.2,2.2)-- (2.2,0.);
\draw (0.,4.4)-- (2.2,4.4);
\draw (2.2,6.6)-- (2.2,4.4);
\draw (4.4,6.6)-- (4.4,4.4);
\draw (4.4,4.4)-- (6.6,4.4);
\draw (4.4,2.2)-- (4.4,0.);
\draw (4.4,2.2)-- (6.6,2.2);
\draw (0.,0.)-- (6.6,0.014820793092255258);
\draw (0.,2.2)-- (0.,0.);
\draw (0.,6.6)-- (0.,0.);
\begin{scriptsize}
\draw [fill=ffqqqq] (1.1,1.1) circle (1.5pt);
\draw [fill=ffqqqq] (5.5,1.1) circle (1.5pt);
\draw [fill=ffqqqq] (3.3,5.3) circle (1.5pt);
\end{scriptsize}
\end{tikzpicture}\qquad
\begin{tikzpicture}[line cap=round,line join=round,>=triangle 45,x=0.4cm,y=0.4cm]
\clip(-0.2,-0.2) rectangle (6.9,6.9);
\draw (0.,6.6)-- (6.6,6.6);
\draw (6.6,6.6)-- (6.6,0.);
\draw (0.,2.2)-- (2.2,2.2);
\draw (2.2,2.2)-- (2.2,0.);
\draw (0.,4.4)-- (2.2,4.4);
\draw (2.2,6.6)-- (2.2,4.4);
\draw (4.4,6.6)-- (4.4,4.4);
\draw (4.4,4.4)-- (6.6,4.4);
\draw (4.4,2.2)-- (4.4,0.);
\draw (4.4,2.2)-- (6.6,2.2);
\draw (0.,0.)-- (6.6,0.014820793092255258);
\draw (0.,2.2)-- (0.,0.);
\draw (0.,6.6)-- (0.,0.);
\begin{scriptsize}
\draw [fill=ffqqqq] (1.1,1.1) circle (1.5pt);
\draw [fill=ffqqqq] (5.5,1.1) circle (1.5pt);
\draw [fill=ffqqqq] (1.1,5.5) circle (1.5pt);
\draw [fill=ffqqqq] (5.5,5.5) circle (1.5pt);
\end{scriptsize}
\end{tikzpicture}\qquad
\noindent \begin{tikzpicture}[line cap=round,line join=round,>=triangle 45,x=0.4 cm,y=0.4 cm]
\clip(-0.2,-0.2) rectangle (6.9,6.9);
\draw (0.,6.6)-- (6.6,6.6);
\draw (6.6,6.6)-- (6.6,0.);
\draw (0.,2.2)-- (2.2,2.2);
\draw (2.2,2.2)-- (2.2,0.);
\draw (0.,4.4)-- (2.2,4.4);
\draw (2.2,6.6)-- (2.2,4.4);
\draw (4.4,6.6)-- (4.4,4.4);
\draw (4.4,4.4)-- (6.6,4.4);
\draw (4.4,2.2)-- (4.4,0.);
\draw (4.4,2.2)-- (6.6,2.2);
\draw (0.015163841011398968,6.6)-- (0.,0.);
\draw (0.,0.)-- (6.6,0.014820793092255258);
\draw (0.7,2.2)-- (0.7,1.4);
\draw (0.7,1.4)-- (0.,1.4);
\draw (0.7,0.7)-- (0.,0.7);
\draw (0.7,0.7)-- (0.7,0.);
\draw (1.4,2.2)-- (1.4,1.4);
\draw (1.4,1.4)-- (2.2,1.4);
\draw (1.4,0.7)-- (2.2,0.7);
\draw (1.4,0.7)-- (1.4,0.);
\begin{scriptsize}
\draw [fill=ffqqqq] (1.1,0.35) circle (1.5pt);
\draw [fill=ffqqqq] (1.1,1.8) circle (1.5pt);
\draw [fill=ffqqqq] (5.5,1.1) circle (1.5pt);
\draw [fill=ffqqqq] (1.1,5.5) circle (1.5pt);
\draw [fill=ffqqqq] (5.5,5.5) circle (1.5pt);
\end{scriptsize}
\end{tikzpicture}
\caption{Optimal configuration of $n$ points for $2\leq n\leq 5 . $}

\end{figure}
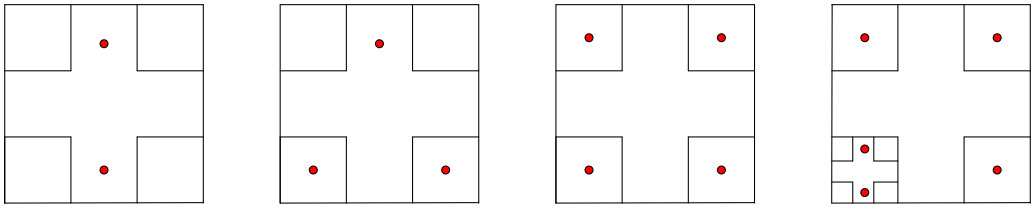

\vspace{.5 in}

\begin{figure}
\noindent \begin{tikzpicture}[line cap=round,line join=round,>=triangle 45,x=0.4 cm,y=0.4 cm]
\clip(-0.2,-0.2) rectangle (9.2,9.2);
\draw (0.,9.)-- (9.,9.);
\draw (9.,9.)-- (9.,0.);
\draw (0.,3.)-- (3.,3.);
\draw (3.,3.)-- (3.,0.);
\draw (0.,6.)-- (3.,6.);
\draw (3.,9.)-- (3.,6.);
\draw (6.,9.)-- (6.,6.);
\draw (6.,6.)-- (9.,6.);
\draw (6.,3.)-- (6.,0.);
\draw (6.,3.)-- (9.,3.);
\draw (0.015163841011398968,9.)-- (0.,0.);
\draw (0.,0.)-- (9.,0.014820793092255258);
\draw (1.,3.)-- (1.,2.);
\draw (1.,2.)-- (0.,2.);
\draw (1.,1.)-- (0.,1.);
\draw (1.,1.)-- (1.,0.);
\draw (2.,3.)-- (2.,2.);
\draw (2.,2.)-- (3.,2.);
\draw (2.,1.)-- (3.,1.);
\draw (2.,1.)-- (2.,0.);
\draw (7.,3.)-- (7.,2.);
\draw (6.,2.)-- (7.,2.);
\draw (8.,3.)-- (8.,2.);
\draw (8.,2.)-- (9.,2.);
\draw (6.,1.)-- (7.,1.);
\draw (7.,1.)-- (7.,0.);
\draw (8.,1.)-- (9.,1.);
\draw (8.,1.)-- (8.,0.);
\draw (1.,9.)-- (1.,8.);
\draw (0.,8.)-- (1.,8.);
\draw (1.,7.)-- (0.,7.);
\draw (1.,7.)-- (1.,6.);
\draw (2.,9.)-- (2.,8.);
\draw (3.,8.)-- (2.,8.);
\draw (3.,7.)-- (2.,7.);
\draw (2.,6.)-- (2.,7.);
\draw (7.,9.)-- (7.,8.);
\draw (7.,8.)-- (6.,8.);
\draw (8.,9.)-- (8.,8.);
\draw (9.,8.)-- (8.,8.);
\draw (8.,7.)-- (9.,7.);
\draw (8.,7.)-- (8.,6.);
\draw (7.,7.)-- (6.,7.);
\draw (7.,7.)-- (7.,6.);
\begin{scriptsize}
\draw [fill=ffqqqq] (6.5,0.5) circle (1.5pt);
\draw [fill=ffqqqq] (0.5,0.5) circle (1.5pt);
\draw [fill=ffqqqq] (2.5,0.5) circle (1.5pt);
\draw [fill=ffqqqq] (8.5,0.5) circle (1.5pt);
\draw [fill=ffqqqq] (0.5,6.5) circle (1.5pt);
\draw [fill=ffqqqq] (2.5,6.5) circle (1.5pt);
\draw [fill=ffqqqq] (6.5,6.5) circle (1.5pt);
\draw [fill=ffqqqq] (8.5,6.5) circle (1.5pt);
\draw [fill=ffqqqq] (0.5,2.5) circle (1.5pt);
\draw [fill=ffqqqq] (2.5,2.5) circle (1.5pt);
\draw [fill=ffqqqq] (6.5,2.5) circle (1.5pt);
\draw [fill=ffqqqq] (8.5,2.5) circle (1.5pt);
\draw [fill=ffqqqq] (0.5,8.5) circle (1.5pt);
\draw [fill=ffqqqq] (2.5,8.5) circle (1.5pt);
\draw [fill=ffqqqq] (6.5,8.5) circle (1.5pt);
\draw [fill=ffqqqq] (8.5,8.5) circle (1.5pt);
\end{scriptsize}
\end{tikzpicture}\qquad
\begin{tikzpicture}[line cap=round,line join=round,>=triangle 45,x=0.4 cm,y=0.4cm]
\clip(-0.2,-0.2) rectangle (9.2,9.2);
\draw (0.,9.)-- (9.,9.);
\draw (9.,9.)-- (9.,0.);
\draw (0.,3.)-- (3.,3.);
\draw (3.,3.)-- (3.,0.);
\draw (0.,6.)-- (3.,6.);
\draw (3.,9.)-- (3.,6.);
\draw (6.,9.)-- (6.,6.);
\draw (6.,6.)-- (9.,6.);
\draw (6.,3.)-- (6.,0.);
\draw (6.,3.)-- (9.,3.);
\draw (0.015163841011398968,9.)-- (0.,0.);
\draw (0.,0.)-- (9.,0.014820793092255258);
\draw (1.,3.)-- (1.,2.);
\draw (1.,2.)-- (0.,2.);
\draw (1.,1.)-- (0.,1.);
\draw (1.,1.)-- (1.,0.);
\draw (2.,3.)-- (2.,2.);
\draw (2.,2.)-- (3.,2.);
\draw (2.,1.)-- (3.,1.);
\draw (2.,1.)-- (2.,0.);
\draw (7.,3.)-- (7.,2.);
\draw (6.,2.)-- (7.,2.);
\draw (8.,3.)-- (8.,2.);
\draw (8.,2.)-- (9.,2.);
\draw (6.,1.)-- (7.,1.);
\draw (7.,1.)-- (7.,0.);
\draw (8.,1.)-- (9.,1.);
\draw (8.,1.)-- (8.,0.);
\draw (1.,9.)-- (1.,8.);
\draw (0.,8.)-- (1.,8.);
\draw (1.,7.)-- (0.,7.);
\draw (1.,7.)-- (1.,6.);
\draw (2.,9.)-- (2.,8.);
\draw (3.,8.)-- (2.,8.);
\draw (3.,7.)-- (2.,7.);
\draw (2.,6.)-- (2.,7.);
\draw (7.,9.)-- (7.,8.);
\draw (7.,8.)-- (6.,8.);
\draw (8.,9.)-- (8.,8.);
\draw (9.,8.)-- (8.,8.);
\draw (8.,7.)-- (9.,7.);
\draw (8.,7.)-- (8.,6.);
\draw (7.,7.)-- (6.,7.);
\draw (7.,7.)-- (7.,6.);
\draw (0.3404138429913868,0.3535497610962645)-- (5.960646256589087E-4,0.3537745896239297);
\draw (0.3404138429913868,0.3535497610962645)-- (0.3423142276326922,0.0038789870960769203);
\draw (0.3423142276326922,0.0038789870960769203)-- (0.3404138429913868,0.3535497610962645);
\draw (0.6735359341589903,0.35347549349725715)-- (0.6719546768124955,0.00243636257541265);
\draw (0.6735359341589903,0.35347549349725715)-- (1.,0.35189423615076243);
\draw (0.3423142276326922,0.0038789870960769203)-- (0.3404138429913868,0.3535497610962645);
\draw (0.34611499691530295,0.6576113037051228)-- (0.0011111790167362308,0.6595038251263912);
\draw (0.34611499691530295,0.6576113037051228)-- (0.34611499691530295,1.001580923781394);
\draw (0.6729811552198289,0.661412072987734)-- (0.6748815398611343,1.);
\draw (0.6729811552198289,0.661412072987734)-- (1.,0.6576113037051232);
\begin{scriptsize}
\draw [fill=ffqqqq] (6.5,0.5) circle (1.5pt);
\draw [fill=ffqqqq] (2.5,0.5) circle (1.5pt);
\draw [fill=ffqqqq] (8.5,0.5) circle (1.5pt);
\draw [fill=ffqqqq] (0.5,6.5) circle (1.5pt);
\draw [fill=ffqqqq] (2.5,6.5) circle (1.5pt);
\draw [fill=ffqqqq] (6.5,6.5) circle (1.5pt);
\draw [fill=ffqqqq] (8.5,6.5) circle (1.5pt);
\draw [fill=ffqqqq] (0.5,2.5) circle (1.5pt);
\draw [fill=ffqqqq] (2.5,2.5) circle (1.5pt);
\draw [fill=ffqqqq] (6.5,2.5) circle (1.5pt);
\draw [fill=ffqqqq] (8.5,2.5) circle (1.5pt);
\draw [fill=ffqqqq] (0.5,8.5) circle (1.5pt);
\draw [fill=ffqqqq] (2.5,8.5) circle (1.5pt);
\draw [fill=ffqqqq] (6.5,8.5) circle (1.5pt);
\draw [fill=ffqqqq] (8.5,8.5) circle (1.5pt);
\draw [fill=ffqqqq] (0.5,0.162) circle (1.5pt);
\draw [fill=ffqqqq] (0.5,0.83) circle (1.5pt);
\end{scriptsize}
\end{tikzpicture}\qquad
\begin{tikzpicture}[line cap=round,line join=round,>=triangle 45,x=0.4 cm,y=0.4 cm]
\clip(-0.2,-0.2) rectangle (9.2,9.2);
\draw (0.,9.)-- (9.,9.);
\draw (9.,9.)-- (9.,0.);
\draw (0.,3.)-- (3.,3.);
\draw (3.,3.)-- (3.,0.);
\draw (0.,6.)-- (3.,6.);
\draw (3.,9.)-- (3.,6.);
\draw (6.,9.)-- (6.,6.);
\draw (6.,6.)-- (9.,6.);
\draw (6.,3.)-- (6.,0.);
\draw (6.,3.)-- (9.,3.);
\draw (0.,0.)-- (9.,0.014820793092255258);
\draw (1.,3.)-- (1.,2.);
\draw (1.,2.)-- (0.,2.);
\draw (1.,1.)-- (0.,1.);
\draw (1.,1.)-- (1.,0.);
\draw (2.,3.)-- (2.,2.);
\draw (2.,2.)-- (3.,2.);
\draw (2.,1.)-- (3.,1.);
\draw (2.,1.)-- (2.,0.);
\draw (7.,3.)-- (7.,2.);
\draw (6.,2.)-- (7.,2.);
\draw (8.,3.)-- (8.,2.);
\draw (8.,2.)-- (9.,2.);
\draw (6.,1.)-- (7.,1.);
\draw (7.,1.)-- (7.,0.);
\draw (8.,1.)-- (9.,1.);
\draw (8.,1.)-- (8.,0.);
\draw (1.,9.)-- (1.,8.);
\draw (0.,8.)-- (1.,8.);
\draw (1.,7.)-- (0.,7.);
\draw (1.,7.)-- (1.,6.);
\draw (2.,9.)-- (2.,8.);
\draw (3.,8.)-- (2.,8.);
\draw (3.,7.)-- (2.,7.);
\draw (2.,6.)-- (2.,7.);
\draw (7.,9.)-- (7.,8.);
\draw (7.,8.)-- (6.,8.);
\draw (8.,9.)-- (8.,8.);
\draw (9.,8.)-- (8.,8.);
\draw (8.,7.)-- (9.,7.);
\draw (8.,7.)-- (8.,6.);
\draw (7.,7.)-- (6.,7.);
\draw (7.,7.)-- (7.,6.);
\draw (0.3404138429913868,0.3535497610962645)-- (5.960646256589087E-4,0.3537745896239297);
\draw (0.3404138429913868,0.3535497610962645)-- (0.3423142276326922,0.0038789870960769203);
\draw (0.3423142276326922,0.0038789870960769203)-- (0.3404138429913868,0.3535497610962645);
\draw (0.6735359341589903,0.35347549349725715)-- (0.6719546768124955,0.00243636257541265);
\draw (0.6735359341589903,0.35347549349725715)-- (1.,0.35189423615076243);
\draw (0.3423142276326922,0.0038789870960769203)-- (0.3404138429913868,0.3535497610962645);
\draw (0.34611499691530295,0.6576113037051228)-- (0.0011111790167362308,0.6595038251263912);
\draw (0.34611499691530295,0.6576113037051228)-- (0.34611499691530295,1.001580923781394);
\draw (0.6729811552198289,0.661412072987734)-- (0.6748815398611343,1.);
\draw (0.6729811552198289,0.661412072987734)-- (1.,0.6576113037051232);
\draw (2.33,1.)-- (2.33,0.66);
\draw (2.,0.66)-- (2.33,0.66);
\draw (2.,0.34801795723163975)-- (2.3356988669301386,0.34826178977878236);
\draw (2.3356988669301386,0.34826178977878236)-- (2.3465684383410945,0.0038642228112744335);
\draw (2.66,0.33)-- (2.66,0.);
\draw (2.66,0.33)-- (3.,0.33);
\draw (2.66,1.)-- (2.66,0.66);
\draw (2.66,0.66)-- (3.,0.66);
\draw (0.,3.)-- (0.,0.);
\draw (0.,9.)-- (0.,0.);
\begin{scriptsize}
\draw [fill=ffqqqq] (6.5,0.5) circle (1.5pt);
\draw [fill=ffqqqq] (8.5,0.5) circle (1.5pt);
\draw [fill=ffqqqq] (0.5,6.5) circle (1.5pt);
\draw [fill=ffqqqq] (2.5,6.5) circle (1.5pt);
\draw [fill=ffqqqq] (6.5,6.5) circle (1.5pt);
\draw [fill=ffqqqq] (8.5,6.5) circle (1.5pt);
\draw [fill=ffqqqq] (0.5,2.5) circle (1.5pt);
\draw [fill=ffqqqq] (2.5,2.5) circle (1.5pt);
\draw [fill=ffqqqq] (6.5,2.5) circle (1.5pt);
\draw [fill=ffqqqq] (8.5,2.5) circle (1.5pt);
\draw [fill=ffqqqq] (0.5,8.5) circle (1.5pt);
\draw [fill=ffqqqq] (2.5,8.5) circle (1.5pt);
\draw [fill=ffqqqq] (6.5,8.5) circle (1.5pt);
\draw [fill=ffqqqq] (8.5,8.5) circle (1.5pt);
\draw [fill=ffqqqq] (0.5,0.162) circle (1.5pt);
\draw [fill=ffqqqq] (0.5,0.83) circle (1.5pt);
\draw [fill=ffqqqq] (2.5,0.165) circle (1.5pt);
\draw [fill=ffqqqq] (2.5,0.825) circle (1.5pt);
\end{scriptsize}
\end{tikzpicture}
\caption{Optimal configuration of $n$ points for $16\leq n\leq 18$.}
\end{figure}
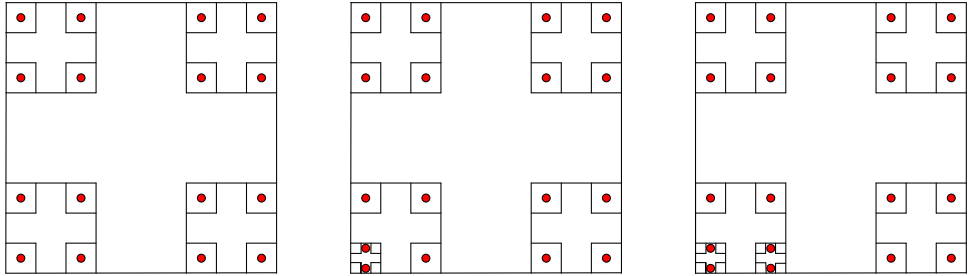

The following proposition gives all optimal sets of two-means.
\begin{prop} \label{prop1}
The sets $\set{(\frac 16, \frac 12), (\frac 56, \frac 12)}$ and  $\set{(\frac 12, \frac 16), (\frac 12, \frac 56)}$ form two different optimal sets of two-means with quantization error $V_2=\frac 5{36}.$
\end{prop}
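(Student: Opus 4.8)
The plan is to first check that each of the two listed pairs is a centroidal Voronoi configuration attaining the value $\frac{5}{36}$, and then to prove that no two-point set does better. For $\set{(\frac16,\frac12),(\frac56,\frac12)}$ the perpendicular bisector of the two points is the line $x_1=\frac12$, so the two Voronoi regions are the left half $J_1\uu J_3$ and the right half $J_2\uu J_4$. By Note~\ref{note1} the cell centroids are $a(\gs)=S_\gs(\frac12,\frac12)$, whence the centroid of $J_1\uu J_3$ is $\frac12\big(S_1(\frac12,\frac12)+S_3(\frac12,\frac12)\big)=(\frac16,\frac12)$ and that of $J_2\uu J_4$ is $(\frac56,\frac12)$; thus the pair is a CVT. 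Applying the distortion formula \eqref{eq1} to $\gs\in\set{1,3}$ with $(a,b)=(\frac16,\frac12)$ yields $\int_{J_1\uu J_3}\|x-(\frac16,\frac12)\|^2\,dP=\frac{5}{72}$, and by the reflection across $x_1=\frac12$ the right half contributes the same, for a total of $\frac{5}{36}$. The second set is the image of the first under the diagonal reflection $(x_1,x_2)\mapsto(x_2,x_1)$, which preserves $P$, so it too is a CVT with error $\frac{5}{36}$.

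For optimality, I would invoke Proposition~\ref{prop10}: an optimal set of two-means is a CVT, so its two points are the centroids of their Voronoi regions and the common boundary is the perpendicular bisector, a straight line. By Lemma~\ref{lemma301} the two optimal points cannot lie on an oblique line, so this bisector is either horizontal or vertical. Since $P$ is invariant under the diagonal reflection $(x_1,x_2)\mapsto(x_2,x_1)$ (which permutes the maps $S_i$ while preserving the weights), the horizontal and vertical cases are interchanged by a measure-preserving isometry and give the same infimum; hence it suffices to analyze a vertical boundary $x_1=c$.

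The core of the argument is a reduction to the one-dimensional Cantor problem. Because $P$ is invariant under $(x_1,x_2)\mapsto(x_1,1-x_2)$ (this swaps $S_1\leftrightarrow S_3$ and $S_2\leftrightarrow S_4$), the conditional mean of $X_2$ on each side of a vertical line equals $\frac12$, so both centroids have second coordinate $\frac12$ and the squared-distance cost splits as
\begin{align*}
&\int_{x_1\le c}\!\Big((x_1-a_1)^2+(x_2-\tfrac12)^2\Big)dP+\int_{x_1> c}\!\Big((x_1-a_2)^2+(x_2-\tfrac12)^2\Big)dP\\
&\qquad=\Big[\int_{x_1\le c}(x_1-a_1)^2\,dP_1+\int_{x_1>c}(x_1-a_2)^2\,dP_1\Big]+V(X_2).
\end{align*}
Here $P_1=P_c$ by Proposition~\ref{prop111}, the bracketed quantity is exactly the distortion of the two-point set $\set{a_1,a_2}$ for the Cantor distribution partitioned at $c$, and $V(X_2)=\frac18$ is a constant by Lemma~\ref{lemma333}. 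Minimizing over $c$ and the centroids thus reduces to the two-means problem for $P_c$, which by Graf--Luschgy \cite{GL2} is solved at $c=\frac12$ by the points $\frac16,\frac56$ with error $\frac{1}{72}$. Adding the constant gives $\frac{1}{72}+\frac18=\frac{5}{36}$, realized precisely by $\set{(\frac16,\frac12),(\frac56,\frac12)}$ and, via the diagonal reflection, by $\set{(\frac12,\frac16),(\frac12,\frac56)}$.

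I expect the main difficulty to lie in making the reduction airtight. One must be sure that Lemma~\ref{lemma301} excludes every oblique perpendicular bisector, not only those through the center $(\frac12,\frac12)$, and that for a vertical partition the $x_2$-contribution is genuinely the partition-independent constant $V(X_2)$ --- a fact that rests on the reflection symmetry across $x_2=\frac12$ forcing each conditional second coordinate to have mean $\frac12$. Once these symmetry reductions are secured, the two-dimensional problem collapses onto the already-settled one-dimensional Cantor quantization, and only the routine arithmetic displayed above remains.
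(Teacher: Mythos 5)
Your proposal is correct and follows essentially the same route as the paper's proof: both exclude oblique configurations via Lemma~\ref{lemma301}, reduce to two points sharing the second coordinate $\frac12$ with a vertical Voronoi boundary, split the distortion into the one-dimensional Cantor two-means error plus the constant $V(X_2)=\frac18$, and invoke Graf--Luschgy's value $\frac1{72}$ to conclude $V_2=\frac1{72}+\frac18=\frac5{36}$. The only cosmetic difference is that you fix the second coordinate at $\frac12$ using the reflection symmetry $(x_1,x_2)\mapsto(x_1,1-x_2)$, whereas the paper deduces it from the centroid condition $(a,p)P(M((a,p)|\ga))+(b,p)P(M((b,p)|\ga))=(\frac12,\frac12)$.
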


\begin{proof}
By Lemma~\ref{lemma301}, the points in an optimal set of two-means can not lie on an oblique line of $S; $ hence, the two optimal quantizers lie either on a horizontal line or on a vertical line. We will show that these lines are $v$ or $h. $

First, assume that the optimal sets lie on a horizontal line. Let $\ga:=\set{(a, p), (b, p)}$ be an optimal set of two-means. Since the optimal quantizers are the centroids of their own Voronoi regions, it follows from by the properties of centroids that
\[(a, p) P(M((a, p)|\ga))+(b, p) P(M((b, p)|\ga))=(\frac 12, \frac 12),\]
which implies that
$$a P(M((a, p)|\ga))+b P(M((b, p)|\ga))=p P(M((a, p)|\ga))+p P(M((b, p)|\ga))=\frac 12 . $$
Thus, $p=\frac 12$, and the two optimal quantizers $(a, \frac 12)$ and $(b, \frac 12)$ lie on the line $h$ and are in opposite sides of the point $(\frac 12, \frac 12)$. Again, since the optimal quantizers are the centroids of their own Voronoi regions, it follows that $0\leq a<\frac12 <b\leq 1$. Thus,
\begin{align*}
&V_2=\int\mathop{\min}\limits_{c\in \ga} \|x-c\|^2 dP=\int\mathop{\min}\limits_{c\in \set{a, b}} \|x-(c, \frac 12)\|^2 dP\\
&=\mathop{\int}\limits_{[0, \frac {a+b}{2}]\times [0, 1]}\|x-(a, \frac 12)\|^2 dP+\mathop{\int}\limits_{[\frac {a+b}{2}, 1]\times [0, 1]}\|x-(b, \frac 12)\|^2 dP\\
&=\mathop{\int}\limits_{[0, \frac {a+b}{2}]}(x_1-a)^2 dP_c+\mathop{\int}\limits_{[\frac {a+b}{2}, 1]}(x_1-b)^2 dP_c+\Big(P_c([0, \frac {a+b}{2}])+P_c([\frac{a+b}{2}, 1]\Big) \int (x_2- \frac 12)^2 dP_c \\
&=\int\mathop{\min}\limits_{c\in \set{a, b}} (x_1-c)^2 dP_c +  \int (x_2- \frac 12)^2 dP_c.
\end{align*}
Now,
$V_2 (P_c)=\int\mathop{\min}\limits_{c\in \set{a, b}} (x-c)^2 dP_c =\frac{1}{72}$
\cite[Proposition~4.6]{GL2} and it occurs when $a=\frac 16$ and $b=\frac 56 . $
Since $\int (x_2- \frac 12)^2 dP_c=\frac {1}{8}, $ we deduce that
$$V_2=\int\mathop{\min}\limits_{c\in \set{a, b}} (x_1-c)^2 dP_c +  \int (x_2- \frac 12)^2 dP_c =
\frac{1}{72}+\frac {1}{8}=\frac 5{36}, $$
and  $\set{(\frac 16, \frac 12), (\frac 56, \frac 12)}$ is an optimal set of two-means. Due to symmetry, the set $\set{(\frac 12, \frac 16), (\frac 12, \frac 56)}$ also forms an optimal set of two-means.
\end{proof}
\medskip

We now state and prove Lemma~\ref{lemma45} and Lemma~\ref{lemma451}. These two lemmas are useful to determine the optimal sets of three means.

\begin{lemma} \label{lemma45}  The set $\ga_3=\set{(\frac 16,
\frac 1 6), (\frac 56, \frac 16), (\frac 12, \frac 56)}$ forms a CVT with three-means of distortion error $\frac 1 {12}$.
\end{lemma}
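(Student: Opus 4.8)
The plan is to first identify the Voronoi partition generated by $\ga_3$, then check the centroid condition site by site, and finally sum the contributions to the distortion error using the relation \eqref{eq1}. Throughout I write $p_1=(\frac16,\frac16)$, $p_2=(\frac56,\frac16)$ and $p_3=(\frac12,\frac56)$ for the three generators.

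First I would compute the three perpendicular bisectors of the pairs of points in $\ga_3$. The bisector of $p_1,p_2$ is the vertical line $x_1=\frac12$, the bisector of $p_1,p_3$ is $x_2=\frac23-\frac12 x_1$, and the bisector of $p_2,p_3$ is $x_2=\frac16+\frac12 x_1$; these three lines meet at the circumcenter $(\frac12,\frac5{12})$, and the whole configuration is symmetric under the reflection $x_1\mapsto 1-x_1$, which interchanges $p_1,p_2$ and fixes $p_3$. Since $P$ is supported on $J_1\uu J_2\uu J_3\uu J_4$, it suffices to locate each of these four level-$1$ squares relative to the bisectors. A coordinate check then shows $J_1\ci M(p_1|\ga_3)$, $J_2\ci M(p_2|\ga_3)$ and $J_3\uu J_4\ci M(p_3|\ga_3)$, up to a $P$-null boundary: over $x_1\in[0,\frac13]$ the bisector of $p_1,p_3$ satisfies $x_2\in[\frac12,\frac23]$, so $J_1$ (where $x_2\le\frac13$) lies entirely below it while $J_3$ (where $x_2\ge\frac23$) lies entirely above it, and the reflection handles $p_2$ and $J_4$.

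Second, I would verify that $\ga_3$ forms a centroidal Voronoi tessellation by confirming that each $p_i$ is the centroid of its own region. By Note~\ref{note1} we have $a(j)=S_j(\frac12,\frac12)$, so $a(1)=(\frac16,\frac16)=p_1$ and $a(2)=(\frac56,\frac16)=p_2$. For $p_3$ the region is $J_3\uu J_4$ with $P(J_3)=P(J_4)=\frac14$, so its centroid is the equally weighted average $\frac12\big(a(3)+a(4)\big)=\frac12\big((\frac16,\frac56)+(\frac56,\frac56)\big)=(\frac12,\frac56)=p_3$, which establishes the CVT property. Finally I would compute the distortion error as a sum of four square-integrals via \eqref{eq1}, using $V=\frac14$ from Lemma~\ref{lemma333}. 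Since $p_1=S_1(\frac12,\frac12)$ and $p_2=S_2(\frac12,\frac12)$, the displacement terms vanish and $\int_{J_1}\|x-p_1\|^2\,dP=\int_{J_2}\|x-p_2\|^2\,dP=\frac14\cdot\frac19\cdot\frac14=\frac1{144}$; for $J_3$ and $J_4$ the displacement term is $\|S_3(\frac12,\frac12)-p_3\|^2=\|S_4(\frac12,\frac12)-p_3\|^2=\frac19$, giving $\frac14\big(\frac1{36}+\frac19\big)=\frac5{144}$ each. Summing yields $V(P;\ga_3)=\frac1{144}+\frac1{144}+\frac5{144}+\frac5{144}=\frac{12}{144}=\frac1{12}$.

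The step I expect to be the main obstacle is the first one: establishing that the Voronoi boundaries do not slice through any of the four level-$1$ squares. This alignment is exactly what lets the distortion error decompose cleanly over $J_1,J_2,J_3,J_4$ and what makes \eqref{eq1} applicable square by square; without it one would be forced to integrate over awkwardly truncated regions. The remaining computations of centroids and of the distortion error are then routine consequences of Note~\ref{note1}, Lemma~\ref{lemma333} and the self-similarity recorded in \eqref{eq1}.
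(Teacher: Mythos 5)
Your proof is correct and takes essentially the same route as the paper: compute the three perpendicular bisectors meeting at $(\frac 12, \frac 5{12})$, identify the Voronoi regions of the three generators as $J_1$, $J_2$ and $J_3\uu J_4$, verify the centroid condition using $a(\gs)=S_\gs(\frac 12,\frac 12)$, and sum the distortion via \eqref{eq1}. The only difference is one of care, in your favor: you justify by explicit coordinate inequalities that the bisectors do not cut any level-one square (the paper appeals to Figure~1~$(a)$ for this), and your term-by-term values $\frac 1{144}+\frac 1{144}+\frac 5{144}+\frac 5{144}=\frac 1{12}$ are cleanly stated, whereas the paper's displayed intermediate expression contains a typographical slip while still asserting the correct total.
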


\begin{proof} Recall that the boundaries of the Voronoi regions lie along the perpendicular bisectors of the line segments joining their centers.
The perpendicular bisectors of the line segments joining each pair of points from the set $\ga_3$ 
are $SW$, $TW$ and $UW$ with equations $x_1=\frac 12$, $x_2=\frac 12 x_1+\frac 16$ and $x_2=-\frac 12 x_1+\frac 23$, respectively, and they concur at the point $W(\frac 12, \frac 5{12} )$ as shown in Figure~1$(a)$. Thus, the three regions $WUOS$, $WSAT$ and $WTBCU$ form a Voronoi tessellation of $S. $ Let us denote the three regions respectively by $M_1$, $M_2$ and $M_3$. If $(p_1, p_2)$, $(q_1, q_2)$ and $(r_1, r_2)$ are the centroids of these regions associated with $P$, respectively, we have
\begin{align*}
(p_1, p_2)&=\frac{1}{P(M_1)}\int_{M_1} x dP=\frac{1}{P(J_1)}\int_{J_1} x dP=\int x d (P\circ S_1^{-1})=S_1(\frac 12, \frac 12)=(\frac 16, \frac 1 6),\\
(q_1, q_2)&=\frac{1}{P(M_2)}\int_{M_2} x dP=\frac{1}{P(J_2)}\int_{J_2} x dP=\int x d(P\circ S_2^{-1})=S_2(\frac 12, \frac 12)=(\frac 56, \frac 1 6), \ \text{and} \\
(r_1, r_2)&=\frac{1}{P(M_3)}\int_{M_3} x dP=\frac{1}{P(J_3\uu J_4)}\int_{J_3 \uu J_4} x dP=\frac{1}{P(J_3\uu J_4)}\Big(\int_{J_3} x dP+\int_{J_4} x dP\Big)\\
&=\frac{1}{P(J_3\uu J_4)}\Big(P(J_3) \int x d(P\circ S_3^{-1})+P(J_4) \int x d(P\circ S_4^{-1})\Big)\\
&=2\Big(\frac 14  S_3(\frac 12, \frac 12)+\frac 14S_4(\frac 12, \frac 12)\Big)=(\frac 12, \frac 56).
\end{align*}
Thus, we see that the given set $\ga_3$ forms a CVT with three-means.  By \eqref{eq1}, the corresponding distortion error is
\begin{align*}
\int\min_{a \in \ga_3} &  \|x-a\|^2 dP=\int_{J_1}  \|x-(\frac 16, \frac16)\|^2 dP+\int_{J_2}  \|x-(\frac 56, \frac16)\|^2 dP+\int_{J_3\uu J_4}  \|x-(\frac 12, \frac56)\|^2 dP\\
&=\frac 1{36} V+\frac 1{36} V+\frac 1{36} \Big[ 2V +\|S_3(\frac 12, \frac 12)-(\frac 12, \frac56)\|^2+\|S_4(\frac 12, \frac 12)-(\frac 12, \frac56)\|^2\Big]=\frac{1}{12},
\end{align*}
proving the assertion.
\end{proof}
\medskip

\begin{remark} \label{rem2} The points in the set $\ga_3=\set{(\frac 16,
\frac 1 6), (\frac 56, \frac 16), (\frac 12, \frac 56)}$ given by Lemma~\ref{lemma45} form vertices of an isosceles triangle. Due to rotational symmetry and uniformity of $P,$ there are four such sets giving the same distortion error $\frac1{12}$.

\end{remark}

Figure 2(b) suggests that the set of points $\set{(\frac 56, \frac 5 6), (\frac {13}{90}, \frac {19}{30}), (\frac {19}{30}, \frac {13}{90})} $ also forms a CVT with three-means.  The following statement confirms this prediction; however, the associated distortion error is larger than $\frac 1 {12}$.

\begin{lemma} \label{lemma451}  The set $\gb_3=\set{(\frac 56,
\frac 5 6), (\frac {13}{90}, \frac {19}{30}), (\frac {19}{30}, \frac {13}{90})}$ forms a CVT with three-means and the corresponding distortion error is larger than $\frac 1 {12}$.
\end{lemma}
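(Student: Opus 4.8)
The plan is to follow the template of Lemma~\ref{lemma45}: first pin down the Voronoi tessellation induced by $\gb_3$, then check the centroid condition, and finally compute the distortion and compare it with $\frac{1}{12}$. For the tessellation, notice that $(\frac{13}{90},\frac{19}{30})$ and $(\frac{19}{30},\frac{13}{90})$ are reflections of one another across the diagonal $x_1=x_2$, so the perpendicular bisector of that pair is exactly the diagonal. I would then write down the other two bisectors, check that all three concur at a single point $W$ on the diagonal (the point $W$ of Figure~1$(b)$), and verify---by testing the extreme corners of the four level-one squares against each bisector---that the three Voronoi regions meet $S$ in
\[ M_1=J_3\uu\big(J_1\ii\set{x_2\geq x_1}\big),\q M_2=J_2\uu\big(J_1\ii\set{x_2\leq x_1}\big),\q M_3=J_4. \]
Concretely, $J_3\ci\set{x_2\geq x_1}$ and $J_2\ci\set{x_2\leq x_1}$ lie cleanly on opposite sides of the diagonal, $J_4$ sits entirely on the $R_1$-side of both oblique bisectors, and $J_1$ is far enough from $R_1$ that it is split by the diagonal alone.

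The heart of the argument is a self-similar recursion for the moments of $P$ over the diagonal-half $D^+:=\setm{(x_1,x_2)\in S}{x_2\geq x_1}$; this is what distinguishes the present lemma from Lemma~\ref{lemma45}, where each Voronoi region was a union of whole cylinders. Since the diagonal meets $S$ in a $P$-null set, symmetry gives $P(D^+)=\frac12$, whence $P(M_1)=P(M_2)=\frac38$ and $P(M_3)=\frac14$. The recursion comes from $J_2\ii D^+=\es$, $J_3\ci D^+$, $J_1\ii D^+=S_1(D^+)$ and $J_4\ii D^+=S_4(D^+)$ (both $S_1,S_4$ preserve the diagonal). Writing $c^+=\int_{D^+}x\,dP$ and pulling $S_1,S_4$ through the integral via Lemma~\ref{lemma1}, I obtain a single linear equation
\[ c^+=\int_{J_3}x\,dP+\tfrac{1}{12}c^+ + \Big(\tfrac{1}{12}c^+ + (\tfrac{1}{12},\tfrac{1}{12})\Big), \]
whose solution is $c^+=(\frac{3}{20},\frac{7}{20})$ (consistent with the value $(\frac{3}{10},\frac{7}{10})$ for the centroid of the full half that already appears in Lemma~\ref{lemma301}). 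Then $\int_{M_1}x\,dP=\int_{J_3}x\,dP+\frac{1}{12}c^+$, and dividing by $P(M_1)=\frac38$ yields exactly $(\frac{13}{90},\frac{19}{30})$. Symmetry gives the centroid $(\frac{19}{30},\frac{13}{90})$ for $M_2$, and $M_3=J_4$ has centroid $S_4(\frac12,\frac12)=(\frac56,\frac56)$; hence $\gb_3$ is a CVT.

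For the distortion I would use the parallel-axis identity $\int_{M_i}\|x-c_i\|^2\,dP=\int_{M_i}\|x\|^2\,dP-P(M_i)\|c_i\|^2$, valid since each $c_i$ is the centroid of $M_i$. The only new ingredient is $\int_{D^+}\|x\|^2\,dP$, which satisfies the same style of fixed-point equation (now carrying the cross terms produced by $S_4$ and using $\int_{D^+}(x_1+x_2)\,dP=\frac12$), giving $\int_{D^+}\|x\|^2\,dP=\frac38$. This feeds into $\int_{M_1}\|x\|^2\,dP=\frac{19}{96}$, and combined with $\int_{M_3}\|x-c_3\|^2\,dP=\frac{1}{36}V=\frac{1}{144}$ (from \eqref{eq1} and Lemma~\ref{lemma333}) and the symmetric contribution of $M_2$, the total distortion works out to $\frac{233}{2700}$, which exceeds $\frac{1}{12}=\frac{225}{2700}$, as claimed.

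The main obstacle is the pair of self-similar recursions for $\int_{D^+}x\,dP$ and $\int_{D^+}\|x\|^2\,dP$: because $M_1$ contains only the portion of $J_1$ above the diagonal rather than all of $J_1$, these moments cannot be read off directly as in Lemma~\ref{lemma45} and must instead be produced by the fixed-point argument above. The accompanying geometric bookkeeping---verifying that the three bisectors cut $S$ into precisely $M_1,M_2,M_3$---is routine but must be carried out to legitimize the tessellation.
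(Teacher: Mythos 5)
Your proposal is correct, and while its skeleton (tessellation, centroid check, distortion comparison) matches the paper's, the decisive computation is done by a genuinely different method. The paper verifies the centroid condition by importing the half-carpet centroid $(\frac{3}{10},\frac{7}{10})$ from \eqref{eq67}--\eqref{eq68} (itself obtained by summing an infinite series in Lemma~\ref{lemma301}), and then it never computes the distortion exactly: it bounds it \emph{below} by truncating the integral over finitely many cylinder sets ($J_3$, $J_{13}$, $J_{113}\uu J_{143}$, \dots), obtaining $\frac{1247143}{14929920}\approx 0.0835331$, which clears $\frac{1}{12}\approx 0.0833333$ by a razor-thin margin. You instead exploit the exact self-similar structure of the half $D^+$: since $J_2\ii D^+=\es$, $J_3\ci D^+$, $J_1\ii D^+=S_1(D^+)$ and $J_4\ii D^+=S_4(D^+)$, both the first and second moments of $P$ over $D^+$ satisfy one-line fixed-point equations. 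I checked your numbers: $c^+=(\frac{3}{20},\frac{7}{20})$ (so the centroid of $M_1$ is indeed $\frac{8}{3}\big((\frac{1}{24},\frac{5}{24})+\frac{1}{12}c^+\big)=(\frac{13}{90},\frac{19}{30})$), $\int_{D^+}\|x\|^2dP=\frac{3}{8}$, $\int_{M_1}\|x\|^2dP=\frac{3}{16}+\frac{1}{36}\cdot\frac{3}{8}=\frac{19}{96}$, hence by the parallel-axis identity $\int_{M_1}\|x-c_1\|^2dP=\frac{19}{96}-\frac{3}{8}\|(\frac{13}{90},\frac{19}{30})\|^2=\frac{857}{21600}$, and the total distortion is $2\cdot\frac{857}{21600}+\frac{1}{144}=\frac{233}{2700}\approx 0.0863>\frac{1}{12}$, consistent with (and strictly above) the paper's lower bound. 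What each approach buys: the paper's truncation avoids any moment computation over non-cylinder regions, at the cost of tedious cylinder bookkeeping and only an inequality; your fixed-point method yields the \emph{exact} distortion with a comfortable margin, re-derives \eqref{eq67}--\eqref{eq68} without summing series, and the geometric verification of the tessellation ($J_4$ wholly on the $R_1$ side of both oblique bisectors, $J_1$ split only by the diagonal, $J_2$, $J_3$ wholly on one side) is routine and checks out against the paper's bisector equations.
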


\begin{proof}
The perpendicular bisectors of the line segments joining each pair of points from the set of points $\set{(\frac 56,
\frac 5 6), (\frac {13}{90}, \frac {19}{30}), (\frac {19}{30}, \frac {13}{90})}$ are $SW$, $OW$ and $TW$ with equations $x_2=\frac{979}{405}-\frac{31 x_1}{9}$, $x_2=x_1$ and $x_2=\frac{979}{1395}-\frac{9 x_1}{31}$, respectively, which meet at the point $W(\frac{979}{1800}, \frac{979}{1800}) . $  Let $(p_1, p_2)$, $(q_1, q_2)$ and $(r_1, r_2)$ be the centroids of the three Voronoi regions with centers respectively $P_1 (\frac {13}{90}, \frac {19}{30})$, $Q_1 (\frac {19}{30}, \frac {13}{90})$ and $R_1 (\frac 56,
\frac 5 6)$. Since the similarity mappings preserve the ratio of the distances of a point from any other two points, by \eqref{eq67} and \eqref{eq68}, with respect to the probability measure $P$ the centroids of the triangles $OBC$ and $OAB$ are obtained as $S_1(\frac{3}{10}, \frac{7}{10})=(\frac{3}{30}, \frac{7}{30})$ and $S_1(\frac{7}{10}, \frac{3}{10})=(\frac{7}{30}, \frac{3}{30})$, respectively.
Therefore, using the definition of centroids, we have

\begin{align*}
(p_1, p_2)&=\frac{1}{P(J_3)+P(\tri OBC)}\Big(P(J_3) \int_{J_3} x dP +P(\tri OBC) \int_{\tri OBC} x dP\Big)\\
&=\frac{1}{\frac 1 4 +\frac 1 8}\Big(\frac 14 (\frac 1 6, \frac 5 6) +\frac 1 8  (\frac{3}{30}, \frac{7}{30})\Big)=\Big(\frac{13}{90},\frac{19}{30}\Big), \\
(q_1, q_2)&=\frac{1}{P(J_2)+P(\tri OAB)}\Big(P(J_2) \int_{J_2} x dP +P(\tri OAB) \int_{\tri OAB} x dP\Big)\\
&=\frac{1}{\frac 1 4 +\frac 1 8}\Big(\frac 14 (\frac 1 6, \frac 5 6) +\frac 1 8  (\frac{7}{30}, \frac{3}{30})\Big)=\Big(\frac{19}{30},\frac{13}{90}\Big),\ \text{and} \\
(r_1, r_2)&=S_4(\frac 12, \frac 12)=\Big(\frac 56, \frac 56\Big).
\end{align*}
Thus, the set $\gb_3$ forms a CVT with three-means. Now, using \eqref{eq1}, the corresponding distortion error is
\begin{align*}
\int\min_{a \in \gb_3} \|x-a\|^2 & dP
=\Big(\te{distortion error due to} (\frac{5}{6},\frac{5}{6})\Big)+2\Big(\te{distortion error due to} (\frac{13}{90},\frac{19}{30})\Big)\\
&>\frac 1{36} V+2 \Big(\int_{J_{3}}\|x-(\frac{13}{90},\frac{19}{30})\|^2 dP +\int_{J_{13}}\|x-(\frac{13}{90},\frac{19}{30})\|^2 dP\\
&+\int_{J_{113}\uu J_{143}}\|x-(\frac{13}{90},\frac{19}{30})\|^2 dP+\int_{J_{1113}\uu J_{1143}\uu J_{1413}\uu J_{1443}}\|x-(\frac{13}{90},\frac{19}{30})\|^2 dP\\
&+\int_{J_{11113}\uu J_{11143}\uu J_{11413}\uu J_{11443}\uu J_{14113}\uu J_{14143}\uu J_{14413}\uu J_{14443}}\|x-(\frac{13}{90},\frac{19}{30})\|^2 dP\\
&+\int_{J_{111113}\uu J_{111143}\uu J_{111413}\uu J_{111443}\uu J_{114113}\uu J_{114143}}\|x-(\frac{13}{90},\frac{19}{30})\|^2 dP\Big)\\
&=\frac{1247143}{14929920}>\frac 1 {12},
\end{align*}
completing the proof of the lemma.
\end{proof}

\begin{remark} \label{rem451}
One of the points of the CVT $\gb_3$ in Lemma~\ref{lemma451} is the centroid of the child $J_4$ and the other two points are equidistant from the diagonal passing through the centroid. Due to rotational symmetry of $S, $ there are four such CVTs with three-means in which one point is the centroid of one of the children $J_1$, $J_2$, $J_3$ or $J_4$ and the other two points are equidistant from the diagonal passing through the centroid, and all have the same distortion error larger than $\frac 1{12}$.
\end{remark}

\begin{prop} \label{prop2}
 Let $\ga_3$ be the set given by Lemma~\ref{lemma45}. Then, $\ga_3$ forms an optimal set of three-means with quantization error $\frac 1{12}$. The number of optimal sets of three-means is four.
\end{prop}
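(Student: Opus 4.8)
The plan is to invoke Proposition~\ref{prop10}, which guarantees that any optimal set of three-means forms a centroidal Voronoi tessellation (CVT) of the carpet; hence it suffices to identify, among all CVTs with three generators, the one of smallest distortion and then to count the minimizers. Lemmas~\ref{lemma45} and \ref{lemma451} already exhibit two families of CVTs, with distortions $\frac1{12}$ and $\frac{1247143}{14929920}>\frac1{12}$ respectively, so the real work is to show that every CVT reduces, under the symmetries of the carpet, to one of these two families; the comparison $\frac1{12}<\frac{1247143}{14929920}$ then settles optimality.

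First I would use the dihedral symmetry group $D_4$ of the carpet, generated by the reflections across $x_1=\frac12$, $x_2=\frac12$ and the two diagonals, under which $P$ is invariant; consequently the image of any optimal set under a symmetry is again optimal with the same distortion. Since an optimal set is a CVT and each symmetry fixing the set permutes its three Voronoi cells, and since an involution acting on three cells must fix at least one, a set symmetric about a reflection axis has one generator on that axis with the other two mirror images. I would then argue that the optimal set is in fact symmetric about one of the four axes by examining the combinatorics of how the three Voronoi cells can meet the four first-level squares $J_1,J_2,J_3,J_4$ and ruling out the non-symmetric distributions of cells among these squares. This leaves exactly two genuinely distinct symmetric types: those symmetric about an edge-bisector (the line $x_1=\frac12$ or $x_2=\frac12$) and those symmetric about a diagonal.

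Within each type I would pin down the CVT. For the edge-bisector type, symmetry forces one generator onto $x_1=\frac12$ and the other two to be reflections of one another; tracking how the three cells meet the four squares shows the only admissible partition is $J_1\mid J_2\mid J_3\uu J_4$, and the centroid computation of Lemma~\ref{lemma45} then identifies the generators as exactly $\ga_3$, with distortion $\frac1{12}$. For the diagonal type, the same bookkeeping forces the split described in Remark~\ref{rem451} (one generator at a child centroid, with one first-level square divided along its diagonal between the remaining two cells), giving the configuration $\gb_3$ of Lemma~\ref{lemma451}, whose distortion exceeds $\frac1{12}$. Thus every optimal set is of $\ga_3$-type.

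Finally, to count the optimal sets I would compute the $D_4$-orbit of $\ga_3$: as noted in Remark~\ref{rem2}, $\ga_3$ is an isosceles triangle whose only nontrivial symmetry is the reflection fixing its apex $(\frac12,\frac56)$, so its stabilizer in $D_4$ has order two and its orbit has size $8/2=4$, giving precisely four optimal sets of three-means. The step I expect to be the main obstacle is the symmetry reduction: rigorously justifying that no asymmetric CVT, and no symmetric CVT other than the two tabulated types, can compete — that is, excluding the many conceivable partitions of the carpet into three Voronoi cells rather than only the two natural ones. This is exactly the delicacy flagged in the introduction, namely that for this measure it can be genuinely hard to decide which point configurations form a CVT.
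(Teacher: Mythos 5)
Your proposal follows essentially the same route as the paper: a symmetry reduction asserting that an optimal set of three-means must have one point on a symmetry axis of the carpet (diagonal or edge-bisector) with the other two mirror-symmetric, followed by comparison of the two resulting CVT types from Lemma~\ref{lemma45} and Lemma~\ref{lemma451}, and a count of four via the carpet's symmetries as in Remark~\ref{rem2}. The symmetry-reduction step you flag as the main obstacle is precisely the point the paper itself treats informally (it simply asserts the symmetric form of the optimal set from the symmetric distribution of the children, without ruling out asymmetric CVTs), so your proposal matches the published argument in both structure and level of rigor.
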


\begin{proof}
As mentioned in Remark~\ref{rem1}, the Cantor dust has four lines of maximum symmetry $h$, $v$, $r$ and $l$ (see Figure~\ref{Fig1}). Due to this, we conjecture that one of the points in an optimal set of three-means must lie on any of the above four lines, and other two will be equidistant from the line. Thus, comparing the distortion errors for the CVTs given by Lemma~\ref{lemma45} and Lemma~\ref{lemma451}, we deduce that the CVT $\ga_3$, given by Lemma~\ref{lemma45}, forms an optimal set of three-means with quantization error $\frac 1{12}$. Remark~\ref{rem2} implies that the number of such sets is four.
\end{proof}

\begin{remark}Lemma~\ref{lemma45} and Lemma~\ref{lemma451} together show that under squared error distortion measure, the centroid condition is not sufficient for optimal quantization for singular continuous probability measures on $\D R^2$, which is already known for absolutely continuous probability measures on $\D R^2$ \cite {DFG}, and for singular continuous probability measure on $\D R$ \cite{R2}.
\end{remark}


\begin{lemma} \label{lemma30}
 Let $n\geq 4$ and let $\ga_n$ be an optimal set of $n$-means such that $\ga_n\ii J_i\neq \es$ for $1\leq i\leq 4$, and $\ga_n$ does not contain any point from $J\setminus \uu_{i=1}^4 J_i$.  If $\gb_i:=\ga_n \ii J_i$ with $n_i:=\te{card}(\gb_i)$, then $S_i^{-1}(\gb_i)$ is an optimal set of $n_i$-means. Moreover,
\[V_n=\frac {1}{36} \left(V_{n_1}+V_{n_2}+V_{n_3}+V_{n_4}\right).\]
\end{lemma}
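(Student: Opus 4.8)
The plan is to exploit the self-similarity of $P$ to collapse the cost over $\alpha_n$ into a sum of four rescaled quantization problems, one for each first-level square $J_i$. First I would record the basic scaling identity. Since the four squares $J_1,\dots,J_4$ are pairwise disjoint, $\operatorname{supp}(P)\subseteq\bigcup_i J_i$, and by Lemma~\ref{lemma1} one has $P|_{J_i}=\tfrac14\,P\circ S_i^{-1}$, the change of variables $x=S_i(y)$ together with $\|S_i(y)-S_i(c)\|=\tfrac13\|y-c\|$ yields, for any finite $\gamma\subseteq J_i$,
\[\int_{J_i}\min_{a\in\gamma}\|x-a\|^2\,dP=\frac{1}{36}\,V\big(P;S_i^{-1}(\gamma)\big).\]
This is exactly the rescaling computation of the preceding Note, now applied to a whole point set rather than a single point. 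In particular $S_i^{-1}(\beta_i)$ has exactly $n_i$ points, so $V(P;S_i^{-1}(\beta_i))\ge V_{n_i}$, and $\sum_i n_i=n$ because the $J_i$ are disjoint and $\alpha_n$ has no generator in $J\setminus\bigcup_i J_i$.

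For the \emph{upper bound} I would build an explicit competitor. Let $\gamma_i$ be an optimal set of $n_i$-means for $P$; then $S_i(\gamma_i)\subseteq J_i$ and $\alpha':=\bigcup_{i=1}^4 S_i(\gamma_i)$ has $n$ points. Restricting the cost of $\alpha'$ to each $J_i$, keeping only the generators $S_i(\gamma_i)$ (which can only increase the local distances), and applying the scaling identity gives
\[V_n\le V(P;\alpha')\le\sum_{i=1}^4\int_{J_i}\min_{a\in S_i(\gamma_i)}\|x-a\|^2\,dP=\frac{1}{36}\sum_{i=1}^4 V_{n_i}.\]

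The reverse inequality is where the real content lies, and it rests on a single \emph{partition claim}: for the optimal set $\alpha_n$, the Voronoi diagram respects the subdivision, i.e.\ for $P$-almost every $x\in J_i$ the nearest generator lies in $\beta_i$. Granting this,
\[V_n=\sum_{i=1}^4\int_{J_i}\min_{a\in\beta_i}\|x-a\|^2\,dP=\frac{1}{36}\sum_{i=1}^4 V\big(P;S_i^{-1}(\beta_i)\big)\ge\frac{1}{36}\sum_{i=1}^4 V_{n_i}.\]
Comparing with the upper bound forces equality throughout; since each summand obeys $V(P;S_i^{-1}(\beta_i))\ge V_{n_i}$ while the two sums coincide, every term must satisfy $V(P;S_i^{-1}(\beta_i))=V_{n_i}$. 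Hence each $S_i^{-1}(\beta_i)$ is an optimal set of $n_i$-means and $V_n=\tfrac{1}{36}\sum_i V_{n_i}$, which is the assertion.

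The main obstacle is the partition claim itself. The four squares are pairwise disjoint with $\operatorname{dist}(J_i,J_j)\ge\tfrac13$ for $i\ne j$, yet each has diameter $\tfrac{\sqrt2}{3}>\tfrac13$, so a naive distance estimate is inconclusive: a generator of $\beta_j$ could a priori beat a poorly placed generator of $\beta_i$ on part of $J_i$. To exclude this I would argue by contradiction from optimality, using the hypothesis that $\alpha_n$ has no generator in the central cross $J\setminus\bigcup_i J_i$ (so every generator sits in exactly one $J_i$) together with the centroid condition of Proposition~\ref{prop10}. If the Voronoi cell of some $a\in\beta_j$ met $J_i$ in a set of positive $P$-measure, then, since $P$ carries no mass on the gap separating $J_i$ from $J_j$, transferring that mass to the generators already present in $\beta_i$ and re-centering should strictly lower the distortion, contradicting optimality. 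Turning this into a genuinely strict decrease while controlling its back-reaction on the neighbouring cells is the delicate step, and it is precisely the disjointness of the $J_i$ and the vanishing of $P$ on the gaps that make it go through.
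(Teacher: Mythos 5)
Your argument is essentially the paper's own proof, reorganized. Both rest on the same two ingredients: the scaling identity $\int_{J_i}\min_{a\in\gamma}\|x-a\|^2\,dP=\frac1{36}V(P;S_i^{-1}(\gamma))$ obtained from Lemma~\ref{lemma1} and the similarity ratio $\frac13$, and a comparison against competitors of the form $S_i(\gamma_i)$ with $\gamma_i$ an optimal set of $n_i$-means. You phrase the conclusion as a two-sided squeeze ($V_n\le\frac1{36}\sum_iV_{n_i}$ from the competitor, $V_n\ge\frac1{36}\sum_iV_{n_i}$ from the partition claim plus $V(P;S_i^{-1}(\beta_i))\ge V_{n_i}$), while the paper runs a term-by-term exchange (if $S_1^{-1}(\beta_1)$ were not optimal, replace it by a better $\gamma_1$, push it back into $J_1$, and contradict the optimality of $\alpha_n$); the mechanism is identical. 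The one substantive point of comparison is the \emph{partition claim} you correctly isolate as the crux --- that for $P$-almost every $x\in J_i$ the nearest generator of $\alpha_n$ lies in $\beta_i$. This is precisely the step the paper passes over in silence: its opening display replaces $\min_{a\in\alpha_n}$ by $\min_{a\in\beta_i}$ on $J_i$ with no justification beyond ``$\alpha_n$ is optimal,'' and its evaluation of the competitor's distortion is likewise written as an equality where only the (sufficient) inequality $\le$ is actually available. So the gap you candidly admit --- turning the mass-transfer heuristic into a rigorous strict decrease --- is not a gap relative to the paper; it is the same gap, which you at least make explicit, correctly observe cannot be closed by the naive metric estimate (since $\operatorname{diam}J_i=\frac{\sqrt2}{3}$ exceeds the separation $\frac13$ between adjacent squares), and sidestep in the competitor step by using inequalities where the paper asserts unjustified equalities. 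In that respect your write-up is the more scrupulous of the two.
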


\begin{proof} By the hypothesis, $\gb_i \neq \emptyset ,$ for all $1\leq i\leq 4 , $ and $\ga_n$ does not contain any point from $J\setminus \uu_{i=1}^4 J_i;$ hence, $  \ga_n=\uu_{i=1}^4 \gb_i$.   Since $\ga_n$  is an optimal set of $n$-means,
\begin{equation*} \label{eq46}
V_n=\sum_{i=1}^4\int_{J_i} \min_{a\in \ga_n} \|x-a\|^2 dP=\sum_{i=1}^4\int_{J_i} \min_{a\in \gb_i} \|x-a\|^2 dP.
\end{equation*}
Now, using Lemma~\ref{lemma1} we have
\begin{equation}\label{eq47}
V_n=\frac{1}{36} \sum_{i=1}^4\int \min_{a\in \gb_i} \|x-S_i^{-1}(a)\|^2 dP=\frac{1}{36} \sum_{i=1}^4\int \min_{a\in S_i^{-1}(\gb_i)} \|x-a\|^2 dP.
\end{equation}
If $S_1^{-1}(\gb_1)$ is not an optimal set of $n_1$-means, then we can find a set $\gg_1\sci \D R^2$ with card$(\gg_1)=n_1$ such that
\[\int \min_{a\in \gg_1} \|x-a\|^2 dP <\int \min_{a\in S_1^{-1}(\gb_1)} \|x-a\|^2 dP.\]
But, then $S_1(\gg_1)\uu \gb_2\uu \gb_3\uu\gb_4$ will be a set of cardinality $n$, and
\begin{align*}
&\int \min\set{ \|x-a\|^2 : a\in S_1(\gg_1)\uu \gb_2\uu \gb_3\uu\gb_4}dP\\
&=\int_{J_1} \min_{a\in S_1(\gg_1)} \|x-a\|^2 dP+\frac{1}{36} \sum_{i=2}^4\int \min_{a\in S_i^{-1}(\gb_i)} \|x-a\|^2 dP\\
&=\frac 1 {36} \int \min_{a\in S_1(\gg_1)} \|x-S_1^{-1}(a)\|^2 dP+\frac{1}{36} \sum_{i=2}^4\int \min_{a\in S_i^{-1}(\gb_i)} \|x-a\|^2 dP\\
&=\frac 1 {36} \int \min_{a\in \gg_1} \|x-a\|^2 dP+\frac{1}{36} \sum_{i=2}^4\int \min_{a\in S_i^{-1}(\gb_i)} \|x-a\|^2 dP\\
&<\frac 1 {36} \int \min_{a\in S_1^{-1}(\gb_1)} \|x-a\|^2 dP+\frac{1}{36} \sum_{i=2}^4\int \min_{a\in S_i^{-1}(\gb_i)} \|x-a\|^2 dP.
\end{align*}
Thus by \eqref{eq47}, we have $\int \min\set{ \|x-a\|^2 : a\in S_1(\gg_1)\uu \gb_2\uu \gb_3\uu\gb_4}dP<V_n$, which contradicts the fact that $\ga_n$ is an optimal set of $n$-means, and so $S_1^{-1}(\gb_1)$ is an optimal set of $n_1$-means. Similarly, one can show that $S_i^{-1}(\gb_i)$ are optimal sets of $n_i$-means for all $2\leq i\leq 4$. Thus, \eqref{eq47} implies $V_n=\frac {1}{36} \left(V_{n_1}+V_{n_2}+V_{n_3}+V_{n_4}\right)$. This completes the proof of the lemma.
\end{proof}

The similarity mappings $S_i$ preserve the ratio of the distances of a point from any other two points; hence, the following proposition is an immediate corollary of Lemma~\ref{lemma30}.

\begin{prop} \label{prop3}
 Let $n\geq 4$ and $\ga_n$ be an optimal set of $n$-means such that $\ga_n\ii J_\gs\neq \es$ for $\gs\in I^{\ell(n)}$ for some $\ell(n) \in \D N$, and $\ga_n$ does not contain any point from $J\setminus \uu_{\gs\in I^{\ell(n)}} J_\gs$.  If $\gb_\gs:=\ga_n \ii J_\gs$ with $n_\gs:=\te{card}(\gb_\gs)$, then $S_\gs^{-1}(\gb_\gs)$ is an optimal set of $n_\gs$-means and
\[V_n=\frac {1}{36^{\ell(n)}} \sum_{\gs \in I^{\ell(n)}}V_{n_\gs}.\]
\end{prop}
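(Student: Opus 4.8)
The plan is to prove the proposition by induction on $\ell(n)$, using Lemma~\ref{lemma30} both as the base case and as the engine of each inductive step. When $\ell(n)=1$ the statement is exactly Lemma~\ref{lemma30}, since $I^1=\set{1,2,3,4}$ and $36^1=36$. So I would assume the result for level $\ell-1$ and let $\ga_n$ be an optimal set of $n$-means satisfying the two hypotheses at level $\ell$.

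First I would verify that $\ga_n$ meets the hypotheses of Lemma~\ref{lemma30} at the first level. Every square $J_\gs$ with $\gs\in I^\ell$ lies inside exactly one first-level square $J_i$, so $\UU_{\gs\in I^\ell}J_\gs\ci\UU_{i=1}^4 J_i$; hence $\ga_n\ii J_i\neq\es$ for each $i$ (because $\ga_n$ already meets every $J_\gs$), and $J\setminus\UU_{i=1}^4 J_i\ci J\setminus\UU_{\gs\in I^\ell}J_\gs$ shows that $\ga_n$ has no point in $J\setminus\UU_{i=1}^4 J_i$. Thus Lemma~\ref{lemma30} applies: with $\gb_i=\ga_n\ii J_i$ and $n_i=\te{card}(\gb_i)$, each $\ga_i':=S_i^{-1}(\gb_i)$ is an optimal set of $n_i$-means and $V_n=\frac{1}{36}\sum_{i=1}^4 V_{n_i}$.

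Next I would feed each $\ga_i'$ into the inductive hypothesis at level $\ell-1$, the only real content being the indexing. Since $S_{i\tau}=S_i\circ S_\tau$, we have $J_{i\tau}=S_i(J_\tau)$ for every $\tau\in I^{\ell-1}$, and the words $i\tau$ with $\tau\in I^{\ell-1}$ are precisely the words of $I^\ell$ beginning with $i$. Using $\ga_i'\ci J$ together with $\gb_{i\tau}=\ga_n\ii J_{i\tau}=\gb_i\ii J_{i\tau}$ (as $J_{i\tau}\ci J_i$), one gets
\[\ga_i'\ii J_\tau=S_i^{-1}\big(\gb_i\ii J_{i\tau}\big)=S_i^{-1}(\gb_{i\tau}),\]
which is nonempty by hypothesis, so $\ga_i'$ meets every level-$(\ell-1)$ square. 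Similarly, if $\ga_i'$ had a point in $J\setminus\UU_{\tau}J_\tau$, then applying $S_i$ would place a point of $\ga_n$ in $J_i\setminus\UU_\tau J_{i\tau}\ci J\setminus\UU_{\gs\in I^\ell}J_\gs$ (here I use that the four first-level squares are pairwise disjoint), contradicting the hypothesis on $\ga_n$. Hence $\ga_i'$ satisfies both hypotheses at level $\ell-1$.

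Applying the inductive hypothesis to $\ga_i'$ and composing with $S_\tau^{-1}\circ S_i^{-1}=S_{i\tau}^{-1}$ then shows that $S_{i\tau}^{-1}(\gb_{i\tau})=S_\tau^{-1}(\ga_i'\ii J_\tau)$ is an optimal set of $n_{i\tau}$-means for every $\tau$, and that $V_{n_i}=\frac{1}{36^{\ell-1}}\sum_{\tau\in I^{\ell-1}}V_{n_{i\tau}}$. Since every $\gs\in I^\ell$ is uniquely of the form $i\tau$, the optimality statement holds for all $\gs\in I^\ell$, and summing over $i$ yields
\[V_n=\frac{1}{36}\sum_{i=1}^4 V_{n_i}=\frac{1}{36}\sum_{i=1}^4\frac{1}{36^{\ell-1}}\sum_{\tau\in I^{\ell-1}}V_{n_{i\tau}}=\frac{1}{36^{\ell}}\sum_{\gs\in I^{\ell}}V_{n_\gs},\]
which closes the induction. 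I expect the only point requiring care to be the identity $\ga_i'\ii J_\tau=S_i^{-1}(\gb_{i\tau})$ together with the transfer of the ``no stray points'' condition through $S_i$; everything else is formal once Lemma~\ref{lemma30} is in hand.
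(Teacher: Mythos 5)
Your proposal is correct and follows essentially the same route as the paper: the paper's own proof is a one-line appeal to Lemma~\ref{lemma30} iterated through the levels via self-similarity, and your induction on $\ell(n)$ is exactly that argument carried out in full detail (including the hypothesis transfer $\ga_i'\ii J_\tau=S_i^{-1}(\gb_{i\tau})$ that the paper leaves implicit). The only point you leave tacit is that $n_i\geq 4^{\ell-1}\geq 4$ when $\ell\geq 2$, which is needed to invoke the inductive hypothesis and follows immediately from the fact that $\ga_i'$ meets all $4^{\ell-1}$ pairwise disjoint squares of level $\ell-1$.
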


\medskip

Proposition \ref{prop3} provides a major step in describing the optimal sets of $n$-means and the associated quantization errors, subject to the conditions that $\ga_n\ii J_\gs\neq \es$ and $ \ga_n \cap (J\setminus \uu_{\gs\in I^{\ell(n)}} J_\gs) =\emptyset . $  Thus, it remains to prove that these conditions are satisfied, which will be the focus of the next three statements.

\begin{lemma}\label{lemma55}
If $\ga_n$ is an optimal set of $n$-means for $n\geq 4 ,$ then $\ga_n\ii J_i\neq\es$ for $1\leq i\leq 4$.
\end{lemma}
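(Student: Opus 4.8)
The plan is to argue by contradiction: suppose some $J_i$ contains no point of $\ga_n$, and, by the rotational symmetry of the carpet and of $P$, assume without loss of generality that $\ga_n\ii J_4=\es$. I would then compare the cost of $\ga_n$ against a configuration that does place a center inside $J_4$ and show the latter is strictly cheaper, contradicting optimality. Two preliminary facts drive the comparison. First, since the support of $P$ lies in $\uu_{i=1}^4 J_i$, we have $P(J\setminus\uu_{i=1}^4 J_i)=0$, so the cost splits as $V_n=\sum_{i=1}^4\int_{J_i}\min_{a\in\ga_n}\|x-a\|^2\,dP$. Second, a single point placed at the centroid $S_4(\tfrac12,\tfrac12)=(\tfrac56,\tfrac56)$ of $J_4$ serves its mass very cheaply: by \eqref{eq1}, $\int_{J_4}\|x-(\tfrac56,\tfrac56)\|^2\,dP=\frac1{144}$.

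Before committing to a route I would flag a pitfall. The tempting move is to bound the $J_4$-term alone from below, estimating the distance from $x\in J_4$ to any admissible center by its distance to the near edges $x_1=\tfrac23$, $x_2=\tfrac23$. Changing variables by $S_4$ this yields only $\int_{J_4}\min_{a\in\ga_n}\|x-a\|^2\,dP\ge \frac1{36}\int\min(y_1,y_2)^2\,dP=\frac{7}{1440}$, which is far below the cost of the four-centroid configuration; indeed, serving all of $J_4$ from the corner $(\tfrac23,\tfrac23)$ already costs only $\frac1{48}$. Hence no contradiction can come from the $J_4$-term by itself, and the argument must be global.

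The clean device is therefore an exchange. Starting from $\ga_n$, which omits $J_4$, I would insert the point $(\tfrac56,\tfrac56)$; reassigning only the mass of $J_4$ to it shows this strictly lowers the cost, by at least $\big(\int_{J_4}\min_{a\in\ga_n}\|x-a\|^2\,dP\big)-\frac1{144}$. To restore cardinality $n$ I would delete the center $b\in\ga_n$ whose Voronoi region carried the largest share of $P(J_4)=\frac14$; since $(\tfrac56,\tfrac56)$ now absorbs that mass, the deletion of $b$ raises the cost by a controllably small amount. The self-similar reduction of Lemma~\ref{lemma30} and Proposition~\ref{prop3} is exactly what lets me quantify both quantities, since it rewrites the intra-square costs of the modified configuration in terms of the lower-order errors $V_m$, making the insertion gain directly comparable with the deletion loss.

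The main obstacle is precisely this comparison for the smallest values of $n$. For large $n$ the omitted square is forced to cover its interior mass with a disproportionately coarse set of centers while $V_n\to0$, so omission is grossly suboptimal; but when $n$ is close to $4$ there may be no genuinely redundant center, and one must verify that the center $b$ heavily invested in $J_4$ still has deletion cost below the insertion gain. I expect to resolve this by taking $\ga_n$ to be optimal and applying its centroid conditions (Proposition~\ref{prop10}): the mass $P(J_4)=\frac14$ forces at least one center to be effectively ``spent'' on $J_4$, and checking the resulting strict inequality closes the contradiction and gives $\ga_n\ii J_i\neq\es$ for every $i$.
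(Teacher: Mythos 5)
Your proposal correctly computes the preliminary facts (the value $\frac1{144}$, the inadequacy of the edge-distance bound $\frac{7}{1440}$, the corner cost $\frac1{48}$), but it is a program rather than a proof: the decisive inequality of the exchange argument --- insertion gain strictly greater than deletion loss --- is never established, and you defer it explicitly (``I expect to resolve this by \dots''). The deletion side is the real problem. The center $b$ whose Voronoi region carries the largest share of $P(J_4)=\frac14$ may simultaneously carry substantial mass from $J_1\cup J_2\cup J_3$ (think of $b$ near the corner $(\frac23,\frac23)$, serving pieces of all four squares); when $b$ is removed, that outside mass must be reassigned to the remaining centers, which may be far away, and nothing in Proposition~\ref{prop10} or any centroid condition bounds this reassignment cost. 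For the small $n$ you yourself flag as the hard case ($n=4,5$) there is no redundant center at all, and $V_{n-1}-V_n$ is of the same order as the quantities being compared, so there is no slack to absorb an uncontrolled loss. Moreover, the tools you invoke to quantify both sides cannot be used here: Lemma~\ref{lemma30} and Proposition~\ref{prop3} take as \emph{hypotheses} exactly the conclusions of this lemma and of Lemma~\ref{lemma56} (that $\alpha_n$ meets every $J_i$ and contains no point outside $\cup_{i=1}^4 J_i$), and they apply only to \emph{optimal} sets; your modified configuration is not optimal and is not known to satisfy those hypotheses, while applying them to $\alpha_n$ itself is circular. Finally, even the insertion gain is not usefully bounded below by your estimates: you only know the $J_4$-cost of $\alpha_n$ exceeds $\frac{7}{1440}$, which is smaller than $\frac1{144}$, so ``gain $\geq$ ($J_4$-cost) $-\frac1{144}$'' is not even shown to be positive.

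For comparison, the paper avoids perturbation arguments entirely. It first records the upper bound $V_n\leq V_4\leq\frac1{36}$ coming from the four-centroid configuration $\{S_i(\frac12,\frac12)\}$, and then kills degenerate configurations by explicit estimates on level-two squares: if all centers lay on a symmetry line (horizontal, vertical, or a diagonal) the distortion would be at least $\frac{49}{1296}>\frac1{36}$, and if only one center lay on one side of a symmetry line the distortion would be at least $\frac{19}{648}>\frac1{36}$; combined with the centroid identity $\sum_i(a_i,b_i)P(M((a_i,b_i)|\alpha_n))=(\frac12,\frac12)$ and the symmetry of $P$, this forces at least two centers on each side of each of the lines $x_1=\frac12$ and $x_2=\frac12$, hence centers in every quadrant and so in every $J_i$. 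If you want to salvage your route, you would need an explicit lower bound, valid for every set avoiding $J_4$, on $\int_{J_4}\min_{a\in\alpha_n}\|x-a\|^2\,dP$ that beats $\frac1{144}$ by more than a provable upper bound on the deletion cost; neither bound appears in your text.
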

\begin{proof}  Recall that the Cantor dust has four lines of symmetry: the lines $h,\ v, \ r$ and $l. $  First, we will prove that, for $n\geq 4, $ an optimal set of $n$-means $\ga_n$ meets each of the quadrants determined by the lines $h$ and $v. $  Consider the set $\gb:=\set{S_i(\frac 12, \frac 12) : 1\leq i\leq 4}$. Then, the distortion error due to the set $\gb$ is given by
\[\int\min_{a \in \gb} \|x-a\|^2dP=\sum_{i=1}^4 \int_{J_i}\|x-S_i(\frac 12, \frac 12)\|^2dP=\frac 1{9} V=\frac 1{36}.\]
Since $V_n$ is the quantization error for $n$-means for $n\geq 4$, we have $V_n\leq V_4\leq \frac 1{36} . $  If all the elements of $\ga_n$ lie on the line $h, $  then, for any $x\in \uu_{i=1}^4 J_{ii}$, $\min_{a \in \ga_n}\|x-a\|^2 \geq (\frac 12-\frac 19)^2=\frac{49}{324}$, and the distortion error is
\begin{align*}
\int\min_{a \in \ga_n}\|x-a\|^2dP> \sum_{i=1}^4 \int_{J_{ii}}\min_{a \in \ga_n}\|x-a\|^2dP\geq \sum_{i=1}^4 \frac{49}{324} P(J_{ii})=\frac{49}{1296}> \frac 1{36}\geq V_n,
\end{align*}
which contradicts the optimality of $\alpha . $ Therefore, $\ga_n \nsubseteq h .$  It follows by the same argument that $\ga_n \nsubseteq v .$
If $\ga_n \subset r, $ then, for any $x \in J_{22}\uu J_{33}, $ $ \ \min_{a \in \ga_n}\|x-a\|^2 \geq \|(\frac 19, \frac 89)-(\frac 12, \frac 12)\|^2=\frac{49}{162}. $  Hence,
\begin{align*}
\int\min_{a \in \ga_n}\|x-a\|^2dP>\int_{J_{22}\uu J_{33}}\min_{a \in \ga_n}\|x-a\|^2dP\geq 2 \cdot\frac{49}{162}\cdot \frac 1{16}=\frac{49}{1296}>V_n,
\end{align*}
which, again, contradicts optimality.  Similarly,  $\ga_n \nsubseteq l, $ either.  Now, by the properties of centroids, we have
\[\sum_{i=1}^4(a_i, b_i) P(M((a_i, b_i)|\ga_n))=(\frac 12, \frac 12),\]
yielding that $\sum_{i=1}^4a_iP(M((a_i, b_i)|\ga_n))=\frac 12$ and $\sum_{i=1}^4 b_i P(M((a_i, b_i)|\ga_n))=\frac 12 . $  This implies that all the elements of $\ga_n$ cannot lie only on one side of the line $h $ nor only on one side of the line $v$.

Next, we claim that at least a pair of points of $\ga_n$ lie in the region above $h$ and at least another pair lie in the region below of $h. $  Suppose that there is only one point of $\ga_n$ that lies above the line $h$. Due to symmetry we can assume that the point lies on the line $v$. Then, for $x\in J_{33}\uu J_{44}$, $\min_{a \in \ga_n}\|x-a\|^2 \geq (\frac 12-\frac 19)^2=\frac{49}{324}$, and for $x\in A=J_{31}\uu J_{32}\uu J_{34}\uu J_{41}\uu J_{42}\uu J_{43}$, $\min_{a \in \ga_n}\|x-a\|^2 \geq (\frac 12-\frac 13)^2=\frac 1{36} . $  Then,
\begin{align*}
\int\min_{a \in \ga_n}\|x-a\|^2dP & >\int_{J_{33}\uu J_{44}}\min_{a \in \ga_n}\|x-a\|^2dP+\int_{A}\min_{a \in \ga_n}\|x-a\|^2dP\\
&\geq 2 \cdot\frac{49}{324}\cdot \frac 1{16}+6\cdot \frac 1{36} \cdot \frac 1{16}=\frac{19}{648}>V_n,
\end{align*}
which is a contradiction, proving the claim.  It follows in the same manner that at least two of the points of $\ga_n$ lie on one side of the line $v$ and at least two of the points of $\ga_n$ lie on the other side the line $v . $ Therefore, $\ga_n$ contains points from each of the four quadrants. Since $ supp(P) \subset J_1\uu J_2\uu J_3\uu J_4$ and $P$ is symmetrically distributed over $J$, we deduce that $\ga_n$ contains points from each $J_i,\ 1\leq i\leq 4; $ i.e., $\ga_n\ii J_i\neq \es$ for $1\leq i\leq 4$.
\end{proof}
\medskip

Below, for any $1\leq i, j\leq 4 , $ we will use the notation $ij^{(k)}$ for the concatenation of $i$ with $j^{(k)}$, where $j^{(k)}$ denotes the $k$-times concatenation of $j$ with itself.

\begin{lemma}\label{lemma56}
If $n\geq 4$ and $\ga_n$ is an optimal set of $n$-means, then $\ga_n\ii (J\setminus \mathop \uu\limits_{i=1}^4 J_i)=\emptyset . $
\end{lemma}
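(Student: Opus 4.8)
I would argue by contradiction. Suppose an optimal set $\ga_n$ contains a point $a_0=(c_1,c_2)$ lying in $J\setminus\uu_{i=1}^4 J_i$. The case $n=4$ is immediate: by Lemma~\ref{lemma55} each $J_i$ must contain a point of $\ga_n$, which already exhausts all four centers and leaves no room for $a_0$, so I may assume $n\geq 5$. A point of the gap satisfies $c_1\in(\frac13,\frac23)$ or $c_2\in(\frac13,\frac23)$, and the Sierpi\'nski carpet is invariant under the full symmetry group of the square; hence, after a reflection and possibly a swap of the two coordinates, I may assume $c_1\in(\frac13,\frac23)$. Since $P_1=P_c$ by Proposition~\ref{prop111}, the first coordinate of every point of the support lies in $[0,\frac13]\uu[\frac23,1]$, so each support point lies strictly to one side of the vertical line through $a_0$.

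Because $\ga_n$ is optimal, $a_0$ is the centroid of its own Voronoi region $M_0:=M(a_0|\ga_n)$ and $P(M_0)>0$ by Proposition~\ref{prop10}. Writing $M_0^L=M_0\ii\set{x_1\leq\frac13}$ and $M_0^R=M_0\ii\set{x_1\geq\frac23}$, with masses $p_L$ and $p_R$, the first-coordinate centroid of $M_0^L$ is $\leq\frac13$ and that of $M_0^R$ is $\geq\frac23$; since the centroid condition forces their $P(M_0)$-weighted average to equal $c_1\in(\frac13,\frac23)$, both $p_L>0$ and $p_R>0$. Thus the Voronoi region of a gap point genuinely straddles the two halves, and its first-coordinate second moment about $c_1$ is at least the between-halves variance $\frac{p_Lp_R}{p_L+p_R}\cdot\frac19$, an irreducible cost produced by serving mass on both sides of a gap of width at least $\frac13$ with a single center.

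The plan is to convert this straddle cost into a strict improvement. Any set obtained by placing an optimal $k_i$-means set inside each $J_i$, with $k_1+k_2+k_3+k_4=n$ and each $k_i\geq1$, is admissible and, by the scaling relation of Lemma~\ref{lemma30} (equivalently Proposition~\ref{prop3}), has distortion $\frac{1}{36}\sum_{i=1}^4 V_{k_i}$; hence $V_n\leq\frac1{36}\sum_{i=1}^4 V_{k_i}$ for every such allocation. I would then show that the distortion of $\ga_n$ strictly exceeds the value of one such corner-only competitor, chosen so that the heavier of $M_0^L,M_0^R$ receives its own dedicated center, thereby contradicting the optimality of $\ga_n$ and ruling out any gap point.

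The main obstacle is that the naive remedy fails: relocating $a_0$ into a single corner while keeping the remaining centers fixed cannot help, precisely because $a_0$ already sits at the centroid of $M_0$, so any single-center move weakly increases the cost. The gain must instead come from a genuine global reallocation --- vacating the slot occupied by $a_0$ and reinvesting it in the corners. The delicate step is therefore to bound the cost incurred when the lighter half of $M_0$ is reabsorbed by the corner centers guaranteed by Lemma~\ref{lemma55}, and to verify that this is strictly dominated by the combined gain from removing the between-halves variance and dedicating a center to the heavier half. Establishing this strict inequality, rather than the symmetry reduction or the straddle estimate (both of which are routine), is the technical heart of the argument.
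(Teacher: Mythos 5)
Your proposal sets up a correct framework --- the symmetry reduction to $c_1\in(\frac13,\frac23)$, the observation that the centroid condition forces the Voronoi region $M_0$ of a gap point to carry positive mass $p_L,p_R$ on both sides of the gap, the between-halves variance lower bound $\frac{p_Lp_R}{p_L+p_R}\cdot\frac19$, and the upper bound $V_n\leq\frac1{36}\sum_{i=1}^4V_{k_i}$ coming from corner-only competitors --- and you correctly dispose of $n=4$ and correctly note that simply relocating $a_0$ cannot work because $a_0$ is already the centroid of $M_0$. But the proof stops exactly where it has to start: the claimed contradiction rests on a strict inequality (distortion of the gap-containing configuration exceeds that of some corner-only or reallocated competitor) which you explicitly defer as ``the delicate step'' and ``the technical heart,'' and never establish. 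As written, this is a proof plan, not a proof.

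Moreover, the deferred step is not a routine estimate that a referee could fill in from what you have written. Your straddle cost degenerates: as $p_L\to 0$ the lower bound $\frac{p_Lp_R}{p_L+p_R}\cdot\frac19$ tends to $0$, while the loss incurred when the lighter half $M_0^L$ is reabsorbed by the remaining centers is controlled, by the bounds available in your argument, only by (mass of $M_0^L$)$\times$(a diameter-type constant). Quantitatively, dedicating a center to the heavier half gains at least $p_R\bigl(\frac{p_L}{3(p_L+p_R)}\bigr)^2$, while the reabsorption loss is only bounded by a constant multiple of $p_L$; the gain is quadratic in $p_L$ and the loss is linear, so for small $p_L$ these crude bounds point the wrong way and no contradiction follows. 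Closing this requires genuinely new input --- some control on where the other $n-1$ centers sit relative to $M_0^L$, or an induction on the structure of the configuration. This is in fact what the paper attempts by a different route: it uses Lemma~\ref{lemma55} together with the symmetry of $P$ to pin the hypothetical gap points to specific locations (the center $(\frac12,\frac12)$, or $(\frac12,\frac16)$ and $(\frac12,\frac56)$) in a case analysis on $n$ modulo $4^{\ell(n)}$, and then compares distortion errors for those explicit candidates, rather than trying to beat an arbitrary gap-point configuration by a mass-reallocation estimate.
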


\begin{proof}
First, assume that $n=m 4^{\ell(n)}$, where $m, \ell(n)\in \D N$. Then, due to the maximum symmetry (i.e., symmetry with respect to $h,l,r,v$ and $P$) and Lemma~\ref{lemma55}, the optimal set $\ga_n$ contains $m 4^{\ell(n)-1}$ elements from each of $J_i, \ 1\leq i \leq 4 . $  By Proposition 1.1, the Voronoi region of each point in an optimal set of n-means has positive probability measure.  Since support of $P$ lies in $ \cup_{i=1}^4 J_i, $ it follows that $\ga_n\ii (J\setminus \mathop \uu\limits_{i=1}^4 J_i)=\emptyset . $   Thus, it remains to prove the assertion for $n=m4^{\ell(n)}+k, \ 1\leq k \leq 3. $

Suppose $\ga_n\ii (J\setminus \mathop \uu\limits_{i=1}^4 J_i) \neq \emptyset ,\  n=m4^{\ell(n)}+k, \ 1\leq k \leq 3. $ Again, due to the maximum symmetry, without loss of generality, we can assume that
\begin{itemize}
\item[(i)] when $k=1,$ $\ga_n$ contains $m 4^{\ell(n)-1}$ elements from each of $J_i$, and the remaining one element is the center $(\frac 12, \frac 12);$
\item[(ii)] when $k=2,$ $\ga_n$ contains $m 4^{\ell(n)-1}$ elements from each $J_i$, and the remaining two elements are the points $(\frac 12, \frac 16)$ and $(\frac 12, \frac 56) ; $ and
\item[(iii)] when $k=3, $ $\ga_n$ contains $m 4^{\ell(n)-1}+1$ elements from each $J_1$ and $J_2$, and $m 4^{\ell(n)-1}$ elements from each of $J_3$ and $J_4$, and the remaining one element is $(\frac 12, \frac 56).$
\end{itemize}
In all these cases, if the Voronoi regions $M((\centerdot , \frac 12)|\ga_n)$ of these points do not contain any point from $\mathop{\uu}\limits_{i=1}^4 J_i$,  then $P(M((\centerdot , \frac 12)|\ga_n))=0, $ and this contradicts Proposition 1.1.

In case (i), if $M((\frac 12, \frac 12)|\ga_n)$ contains points from $J_i$ for all $1\leq i\leq 4, $. then there is a positive integer $k$ such that $J_{14^{(k)}}\uu J_{23^{(k)}}\uu J_{32^{(k)}}\uu J_{41^{(k)} }\sci M((\frac 12, \frac 12)|\ga_n) . $ Then, the distortion error due to the set $\ga_n$ is larger than the distortion error due to the set $\ga_n$ when the extra one point is moved to any of the children $J_i, \ 1\leq i\leq 4 . $  This contradicts the fact that $\ga_n$ is an optimal set of $n$-means. Thus, $\ga_n\ii (J\setminus \mathop \uu\limits_{i=1}^4 J_i) =\emptyset . $

In case (ii), if $M((\frac 12, \frac 16)|\ga_n)$ and  $M((\frac 12, \frac 56)|\ga_n)$ contain points from $\mathop{\uu}\limits_{i=1}^4 J_i , $ then $M((\frac 12, \frac 16)|\ga_n)$ will contain points from both $J_1$ and $J_2$, and $M((\frac 12, \frac 56)|\ga_n)$ will contain points from both $J_3$ and $J_4$. But, in that situation the distortion error due to the set $\ga_n$ is larger than the distortion error due to the set $\ga_n$ when the point $(\frac 12, \frac 16)$ is moved to either $J_1$ or $J_2$, and the point $(\frac 12, \frac 56)$ is moved to either $J_3$ or $J_4, $ contradicting the optimality of $\ga_n . $ Thus, $\ga_n\ii (J\setminus \mathop \uu\limits_{i=1}^4 J_i) = \emptyset . $

In case (iii), if $M((\frac 12, \frac 56)|\ga_n)$ contains points from $\mathop{\uu}\limits_{i=1}^4 J_i, $ then $M((\frac 12, \frac 56)|\ga_n)$ will contain points from both $J_3$ and $J_4$. But, then the distortion error due to the set $\ga_n$ is larger than the distortion error due to the set $\ga_n$ when the point $(\frac 12, \frac 56)$ is moved to either $J_3$ or $J_4, $ contradicting the optimality of $\ga_n . $  Thus, $\ga_n\ii (J\setminus \mathop \uu\limits_{i=1}^4 J_i) =\emptyset . $
\end{proof}

In the next proposition, $\gs^-$ will denote the word $\gs_1\gs_2\cdots\gs_{n-1}, $ where $\gs=\gs_1\gs_2\cdots \gs_n . $

\begin{prop}\label{prop32}
 Let $\ga_n$ be an optimal set of $n$-means for $n\geq 4^{\ell(n)}$ for some $\ell(n)\in \mathbb N$. Then, for each $\gs \in I^{\ell(n)}, \ \ga_n\ii J_\gs\neq\es $ and $ \ga_n\ii (J_{\gs^-} \setminus \mathop \uu\limits_{i=1}^4 J_{\gs^- i})= \emptyset . $
\end{prop}
\begin{proof}
First, we will consider the case $n=4^{\ell(n)}$ for some $\ell(n)\in \D N$.  If $\ell(n)=1, \ n=4, $ then the assertion follows from Lemma~\ref{lemma55} and Lemma~\ref{lemma56}.  So, let $\ell(n)\geq 2$. Then, by Lemma~\ref{lemma55} and Lemma~\ref{lemma56}, and the symmetry of $P$, the set $\ga_n$ contains $4^{\ell(n)-1}$ elements from each of $J_{i_1}$, where $1\leq i_1\leq 4$.  Since the probability measure $P\circ S_{i_1}^{-1}$ on $J_{i_1}$ is also symmetric as the probability measure $P$ on $J, $ by applying Lemma~\ref{lemma55} and Lemma~\ref{lemma56} on $J_{i_1}$, we see that $\ga_n$ contains $4^{\ell(n)-2}$ elements from each of $J_{i_1i_2}$, where $1\leq i_2\leq 4$. Proceeding in this way inductively, we see that $\ga_n$ contains one element from each of $J_\gs$, where $\gs \in I^{\ell(n)}$, in other words, $\ga_n\ii J_\gs\neq \es$ for $\gs \in I^{\ell(n)}$. Since $\ga_n\ii J_{\gs^-}=\uu_{i=1}^4 \ga_n\ii J_{\gs^-i}$ it follows that $\ga_n\ii (J_{\gs^-} \setminus \mathop \uu\limits_{i=1}^4 J_{\gs^- i})=\emptyset . $

Now, let $n=4^{\ell(n)}+1$ for some $\ell(n) \in \D N$.  Again, by Lemma~\ref{lemma55}, Lemma~\ref{lemma56} and symmetry of $P$, there exists an element $i_1\in I$ such that $\ga_n$ contains $4^{\ell(n)-1}+1$ elements from $J_{i_1}$. Applying Lemma~\ref{lemma55} and Lemma~\ref{lemma56} on $J_{i_1}$ again, and proceeding in this way inductively, we can see that there exists a word $\gs\in I^{\ell(n)}$, where $\gs$ is an extension of $i_1$, such that $\ga_n$ contains two elements from $J_\gs$, and $\ga_n$ contains only one element from each  $J_\gt$ for $\gt\in I^{\ell(n)}$ with $\gt\neq \gs$. Thus, the proposition is true for $n=4^{\ell(n)}+1$.

When $n=4^{\ell(n)}+2$ for some $\ell(n) \in \D N$, via Lemma~\ref{lemma55}, Lemma~\ref{lemma56} and symmetry of $P$, there exists two elements $i_1, i_2\in I$, such that $\ga_n$ contains $4^{\ell(n)-1}+1$ elements from each of $J_{i_1}$ and $J_{i_2}$, and $4^{\ell(n)-1}$ elements from each of $J_j$ for $j\in I\setminus\set{i_1, i_2}$. Applying Lemma~\ref{lemma55} and Lemma~\ref{lemma56} on both $J_{i_1}$ and $J_{i_2}$ again, and proceeding in this way inductively, we can see that there exist words $\gs, \gt \in I^{\ell(n)}$, where $\gs$ is an extension of $i_1$, and $\gt$ is an extension of $i_2$, such that $\ga_n$ contains two elements from each of $J_\gs$ and $J_\gt$, and $\ga_n$ contains only one element from each  $J_\gd$ for $\gd\in I^{\ell(n)}\setminus\set{\gs, \gt}$. Thus, the proposition is true for $n=4^{\ell(n)}+2$.

If $n=4^{\ell(n)}+3,$ the assertion is obtained similarly to the case $n=4^{\ell(n)}+1.$  Thus, we deduce that the statement is valid for any $n\geq 4^{\ell(n)},\ \ell(n)\in \D N$.

\end{proof}

By Lemma~\ref{lemma333}, the set $\ga_1=\{(\frac12, \frac12)\} $ is the only optimal set of one-mean.
with quantization error $V=\frac 14$.  By
Proposition~\ref{prop1} and Proposition~\ref{prop2}, the sets
$ \ga_2=\{(\frac12, \frac16), (\frac12, \frac56)\}$ and
$ \ga_3=\{(\frac16, \frac16), (\frac56, \frac16), (\frac12, \frac56)\} $
are optimal sets of two- and three-means
with quantization error $\frac{5}{36} $ and $ \frac{1}{12}, $ respectively.
The sets $\ga_2 $ and $\ga_3 $ are not the only optimal sets of two- and three-means; indeed, the total number of optimal sets of two-means is two and the total number of optimal sets of three-means is four.  With this, the optimal sets of $n$-means for all $n \geq 4$, their numbers and the associated quantization error are determined by the following theorem.

\begin{theorem}  \label{Th1}
Let $P$ be a Borel probability measure on $\D R^2$ supported by the Cantor dust $S. $  Let $n\geq 4, \ 1\leq m\leq 3$. Then,
\begin{itemize}
\item[(i)] if $n=m 4^{\ell(n)} $ for some $\ell(n) \in \mathbb Z^+$, then
$\ga_n=\set{S_\gs(\ga_m) : \gs \in I^{\ell(n)}}$ is an optimal set of $n$-means. The number of such sets is $(2^{m-1})^{4^{\ell(n)}}$ and the corresponding quantization error is
\[V_n=\sum_{\gs \in I^{\ell(n)}} \int_{J_\gs} \min_{a \in S_\gs(\ga_m)} \|x-a\|^2dP.\]
\item[(ii)] if $n=m 4^{\ell(n)}+k $, where $1\leq k<4^{\ell(n)}$ for some $\ell(n)\in \mathbb Z^+ , $ and $t\sci I^{\ell(n)}$ with card$(t)=k$, then,
$ \ga_n(t)=\set{S_\gs(\ga_m) : \gs \in I^{\ell(n)}\setminus t} \uu\set {S_{\gs}(\ga_{m+1}) : \gs \in  t} $
is an optimal set of $n$-means. The number of such sets is $(2^{m-1})^{4^{\ell(n)}-k}\cdot {}^{4^{\ell(n)}}C_k\cdot 2^{mk}$ if $m=1, 2$, and $(2^{m-1})^{4^{\ell(n)}-k}\cdot {}^{4^{\ell(n)}}C_k$ if $m=3$; the corresponding quantization error is
\[V_n=\sum_{\gs \in I^{\ell(n)}\setminus t} \int_{J_\gs}\min_{a \in S_\gs(\ga_m)} \|x-a\|^2dP+\sum_{\gs \in t}\int_{J_{\gs}}\min_{a \in S_\gs(\ga_{m+1})} \|x-a\|^2dP,\]
where $^uC_v =\begin{pmatrix} u \\ v \end{pmatrix} , $ the binomial coefficients.
\end{itemize}
\end{theorem}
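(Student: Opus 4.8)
The plan is to fix $n\geq 4$ and write it uniquely as $n=m\,4^{\ell}+k$ with $1\leq m\leq 3$, $0\leq k<4^{\ell}$ and $\ell=\ell(n)$ determined by $4^{\ell}\leq n<4^{\ell+1}$; then $k=0$ is part $(i)$ and $1\leq k<4^{\ell}$ is part $(ii)$. (Uniqueness holds because the blocks $[4^{\ell},4^{\ell+1})$ partition $\set{n\geq 4}$ and each is cut into three pieces by $m$, with $m=\lfloor n/4^{\ell}\rfloor\in\set{1,2,3}$.) Since $n\geq 4^{\ell}$, I would apply Proposition~\ref{prop32} to an arbitrary optimal set $\ga_n$: it gives $\ga_n\ii J_\gs\neq\es$ for every $\gs\in I^{\ell}$ and no point of $\ga_n$ in any hole $J_{\gs^-}\setminus\uu_{i=1}^4 J_{\gs^-i}$, so $\ga_n=\uu_{\gs\in I^{\ell}}\gb_\gs$ with $\gb_\gs:=\ga_n\ii J_\gs$ nonempty, $n_\gs:=\te{card}(\gb_\gs)$, and $\sum_\gs n_\gs=n$. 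Proposition~\ref{prop3} then supplies both facts I need at once: each $S_\gs^{-1}(\gb_\gs)$ is an optimal set of $n_\gs$-means, and
\[
V_n=\frac{1}{36^{\ell}}\sum_{\gs\in I^{\ell}}V_{n_\gs}.
\]

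This reduces the whole problem to minimizing $\sum_\gs V_{n_\gs}$ over assignments of positive integers with $\sum_\gs n_\gs=n$. I would argue the minimizer is the most balanced assignment: $n_\gs=m+1$ on exactly $k$ cells and $n_\gs=m$ on the other $4^{\ell}-k$ cells. The mechanism is a discrete convexity of the quantization errors: if $V_{j-1}+V_{j+1}>2V_j$ for all relevant $j$, then any assignment in which two cell sizes differ by at least $2$ is strictly improved by shifting one point from the larger cell to the smaller, so the optimum can only be balanced. I would try to establish this convexity from the exact small values $V_1=\tfrac14$, $V_2=\tfrac{5}{36}$, $V_3=\tfrac1{12}$, $V_4=\tfrac1{36}$ (from Lemma~\ref{lemma333}, Proposition~\ref{prop1}, Lemma~\ref{lemma45}, and the $n=4$ instance of part $(i)$) together with the self-similar scaling of Proposition~\ref{prop3} used inductively. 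This balancing step is where I expect the real difficulty to lie: already $V_2-V_3=V_3-V_4$, so the marginal decrements are only weakly, not strictly, decreasing for small sizes, and one must work to rule out unbalanced assignments that merely tie in total error, especially near the junction $m=3$, $m+1=4$.

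Granting the balanced assignment, the remainder is bookkeeping. On a cell with $n_\gs=m$, the optimal set $S_\gs^{-1}(\gb_\gs)$ must be one of the optimal sets of $m$-means, i.e.\ an $\ga_m$; on a cell with $n_\gs=m+1$ it must be an $\ga_{m+1}$. Hence $\ga_n$ has exactly the announced form $\set{S_\gs(\ga_m):\gs\in I^{\ell}\setminus t}\uu\set{S_\gs(\ga_{m+1}):\gs\in t}$, where $t\ci I^{\ell}$ collects the $k$ cells carrying the extra point. Substituting the balanced sizes into the displayed formula for $V_n$ (equivalently, rewriting each term by relation~\eqref{eq1}) yields the stated sum-of-integrals expression for the quantization error.

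For the count I would multiply independent choices: $\,{}^{4^{\ell}}C_{k}$ ways to choose $t$, times the number of optimal $m$-configurations on each of the $4^{\ell}-k$ ordinary cells, times the number of optimal $(m+1)$-configurations on each of the $k$ special cells. Using that there are $2^{m-1}$ optimal sets of $m$-means for $m\in\set{1,2,3}$ (one for $m=1$, two for $m=2$, four for $m=3$) and a \emph{single} optimal set of $4$-means, this gives $(2^{m-1})^{4^{\ell}}$ in part $(i)$, and $(2^{m-1})^{4^{\ell}-k}\cdot{}^{4^{\ell}}C_{k}\cdot 2^{mk}$ for $m=1,2$ (each $(m+1)$-cell contributing $2^{m}$ choices) and $(2^{m-1})^{4^{\ell}-k}\cdot{}^{4^{\ell}}C_{k}$ for $m=3$ (each $4$-cell contributing its unique configuration) in part $(ii)$. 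The existence and error statements follow cleanly from Propositions~\ref{prop32} and~\ref{prop3}; the convexity/balancing of Step two, and the strictness it must have for the exact enumeration, is the crux on which the argument turns.
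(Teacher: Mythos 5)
Your proposal follows the same architecture as the paper's own proof: Proposition~\ref{prop32} to force $\ga_n\ii J_\gs\neq \es$ for every $\gs\in I^{\ell(n)}$ with no points in the gaps, Proposition~\ref{prop3} to get per-cell optimality and $V_n=36^{-\ell(n)}\sum_{\gs}V_{n_\gs}$, and the same independent-choices product for the enumeration. The single point of divergence is the balancing step, and that is exactly where the genuine gap sits, as you candidly flag. The paper does not prove balance inside the proof of Theorem~\ref{Th1} at all: it reads the per-cell counts ($m$ or $m+1$ points in each $J_\gs$) directly out of Proposition~\ref{prop32}, whose \emph{stated} conclusion is only nonemptiness plus absence of gap points; the balanced counts appear only in that proposition's proof sketch, asserted ``due to symmetry.'' Your plan to replace this by a convexity/exchange argument is the honest route, but it provably cannot deliver strict balance, because the tie you noticed is exact: with $V_1=\frac 9{36}$, $V_2=\frac 5{36}$, $V_3=\frac 3{36}$, $V_4=\frac 1{36}$ one has $V_2+V_4=2V_3$, so an exchange between cell sizes $(3,3)$ and $(2,4)$ leaves the total error unchanged, and no refinement of the convexity argument can rule the unbalanced configuration out.

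The consequence is worse than a missing lemma: the strict balancing your argument aims for is false, and with it the theorem's enumeration. Take $n=12=3\cdot 4$. Granting Lemma~\ref{lemma55}, Lemma~\ref{lemma56} and Proposition~\ref{prop3} (the same inputs you and the paper both use), $V_{12}=\frac 1{36}\min\setm{\sum_{i=1}^4 V_{n_i}}{n_i\geq 1,\ \sum n_i=12}$, and the assignments $(3,3,3,3)$, $(2,3,3,4)$ and $(2,2,4,4)$ all realize the minimum $\frac{12}{36}$, so that $V_{12}=\frac 1{108}$ and the set
\[
S_1(\ga_2)\uu S_2(\ga_4)\uu S_3(\ga_3)\uu S_4(\ga_3)
\]
is an optimal set of $12$-means that is \emph{not} of the form $\set{S_\gs(\ga_3):\gs\in I}$; in particular the count $(2^{3-1})^{4}=256$ cannot be the number of all optimal sets. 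Only the existence half of the statement survives this: balanced assignments are always among the minimizers of $\sum_\gs V_{n_\gs}$, since the decrements $V_j-V_{j+1}$ are weakly decreasing (they scale by $\frac 1{36}$ from one level to the next), so the displayed sets are indeed optimal and the error formula holds. In short, your reduction and counting logic are fine and your identification of the crux is exactly right, but the gap is real and unfillable as stated: any correct completion must either weaken the conclusion to existence plus a corrected enumeration that includes the tied unbalanced configurations, or supply an argument---absent from both your proposal and the paper---that excludes them.
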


\begin{proof} Let $m=1, 2, 3$. Let $n=m 4^{\ell(n)},\ 1\leq m \leq 3 , $ for some $\ell(n) \in \D N$. Then, by Proposition~\ref{prop32}, it follows that $\ga_n$ contains $m$ elements from each $J_\gs$ for $\gs\in I^{\ell(n)}$, and Proposition~\ref{prop3} implies that $S_\gs^{-1}(\ga_n\ii J_\gs)$ is an optimal set of $m$-means (i.e.,  $\ga_n\ii J_\gs=S_\gs(\ga_m)$); thus,
\[\ga_n=\mathop{\uu}\limits_{\gs \in I^{\ell(n)}} S_\gs(\ga_m)=\set{S_{\gs}(\ga_m) : \gs \in I^{\ell(n)}}.\]
Since $\ga_m$ can be chosen in $2^{m-1}$ different ways, the number of such sets is $(2^{m-1})^{4^{\ell(n)}}$, and the corresponding quantization error is given by
\[V_n=\int \min_{a \in \ga_n} \|x-a\|^2dP=\sum_{\gs \in I^{\ell(n)}} \int_{J_\gs} \min_{a \in \ga_n} \|x-a\|^2dP=\sum_{\gs \in I^{\ell(n)}} \int_{J_\gs} \min_{a \in  S_\gs(\ga_m)} \|x-a\|^2dP.\]

To prove $(ii)$ we proceed as follows: let $t\sci I^{\ell(n)}$ with card$(t)=k$. Then, by Proposition~\ref{prop32}, it follows that $\ga_n$ contains $m$ elements from $J_\gs$ for each $\gs\in I^{\ell(n)}\setminus t$, and $(m+1)$ elements from $J_\gs$ for $\gs \in t$. In other words, $\ga_n\ii J_\gs=S_\gs(\ga_m)$ for $\gs\in I^{\ell(n)}\setminus t$, and $\ga_n\ii J_\gs=S_\gs(\ga_{m+1})$ for $\gs \in t$. Thus,
\[\ga_n(t)=\set{S_\gs(\ga_m) : \gs \in I^{\ell(n)}\setminus t} \uu\set {S_{\gs}(\ga_{m+1}) : \gs \in  t}.\]
The corresponding quantization error is given by
\begin{align*}
&V_n=\int \min_{a \in \ga_n} \|x-a\|^2dP=\sum_{\gs \in I^{\ell(n)}\setminus t} \int_{J_\gs} \min_{a \in \ga_n} \|x-a\|^2dP+\sum_{\gs \in t}\int_{J_{\gs}} \min_{a \in \ga_n} \|x-a\|^2dP\\
&=\sum_{\gs \in I^{\ell(n)}\setminus t} \int_{J_\gs}\min_{a \in S_\gs(\ga_m)} \|x-a\|^2dP+\sum_{\gs \in t}\int_{J_{\gs}}\min_{a \in S_\gs(\ga_{m+1})} \|x-a\|^2dP.
\end{align*}
Recall that $\ga_2$ can be chosen in two different ways, $\ga_3$ can be chosen in three different ways, and $\ga_4$ can be chosen in only one way. Thus, if $m=1$, the number of $\ga_n$ is $^{4^{\ell(n)}}C_k\cdot 2^k$; if $m=2$, the number of $\ga_n$ is $2^{4^{\ell(n)}-k}\cdot {}^{4^{\ell(n)}}C_k\cdot 4^k$; and if $m=3$, the number of $\ga_n$ is $4^{4^{\ell(n)}-k}\cdot {}^{4^{\ell(n)}}C_k$. Hence, the proof of the theorem is complete.
\end{proof}
\bigskip

\section{Quantization dimension and quantization coefficient}

Since the standard Cantor dust $S$ under investigation satisfies the strong separation condition, with each $S_i$ having contracting factor of $\frac{1}{3}, $ it's Hausdorff dimension is equal to the similarity dimension.  Hence, from the equation $4 (\frac 1 3)^\gb=1, $ we have $\dim_{\te{H}}(S)=\beta =\frac{\log 4}{\log 3}. $  By Theorem~14.17 in \cite{GL1}, the quantization dimension $D_2 (P) $ exists and is equal to $ \beta. $

Next, we proceed to show that $\gb$ dimensional quantization coefficient for $P$ does not exist. The proof of it follows the same general strategy as \cite[Theorem~6.3]{GL2}.  We begin with the following simple observation.
If $f : [1, 2]\to \D R$ be a function defined by $f(x)=\frac 1{36} x^{\frac 2\gb}(13-x), $ then it is strictly increasing on $[1,2]; $ and  hence $f([1, 2])=[\frac {1}{3}, \frac {11}{12}] . $

\begin{theorem} \label{Th3}
 $\gb$-dimensional quantization coefficient for $P$ does not exist.
\end{theorem}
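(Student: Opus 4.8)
The plan is to use the closed-form description of $V_n$ from Theorem~\ref{Th1} to evaluate $n^{2/\gb}V_n$ along explicit subsequences and to exhibit two different accumulation values; since the $\gb$-dimensional quantization coefficient exists iff $\lim_n n^{2/\gb}V_n$ exists, producing $\liminf_n n^{2/\gb}V_n<\limsup_n n^{2/\gb}V_n$ settles the claim. The cleanest starting point is the subsequence $n=4^{\ell}$. By \eqref{eq45} we have $V_{4^{\ell}}=\frac14\,9^{-\ell}$, and since $\gb=\frac{\log 4}{\log 3}$ gives $4^{2/\gb}=9$, we get $n^{2/\gb}V_n=9^{\ell}\cdot\frac14\,9^{-\ell}=\frac14$ for every $\ell$. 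Thus $\frac14$ is an accumulation value of $n^{2/\gb}V_n$, whence $\liminf_n n^{2/\gb}V_n\le\frac14$.

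Next I would produce a strictly larger accumulation value along $n=2\cdot 4^{\ell}$. By Theorem~\ref{Th1}$(i)$ with $m=2$, together with the scaling $V_{m4^{\ell}}=\frac{1}{9^{\ell}}V_m$ coming from Proposition~\ref{prop3}, one has $V_{2\cdot4^{\ell}}=\frac{1}{9^{\ell}}V_2=\frac{1}{9^{\ell}}\cdot\frac{5}{36}$. Since $2^{2/\gb}=4^{1/\gb}=3$, we have $(2\cdot4^{\ell})^{2/\gb}=2^{2/\gb}9^{\ell}=3\cdot 9^{\ell}$, so that $n^{2/\gb}V_n=3\cdot\frac{5}{36}=\frac{5}{12}$ for every $\ell$. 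Hence $\limsup_n n^{2/\gb}V_n\ge\frac{5}{12}$, and combining the two computations yields
\[
\liminf_{n} n^{2/\gb}V_n\le\frac14<\frac{5}{12}\le\limsup_{n} n^{2/\gb}V_n ,
\]
so $\lim_n n^{2/\gb}V_n$ does not exist; i.e. the $\gb$-dimensional quantization coefficient for $P$ does not exist.

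To exhibit the full oscillation and make the role of the preceding lemma explicit, I would parametrize a general $n$ by its position $x=n/4^{\ell(n)}$ inside its block $[4^{\ell(n)},4^{\ell(n)+1})$ and use Theorem~\ref{Th1}$(ii)$ to write $n^{2/\gb}V_n$ as an explicit function of $x$ in each residue regime $m=1,2,3$. On the fundamental interval this normalized error is controlled by the function $f$ of the preceding lemma, whose range over $[1,2]$ is the nondegenerate interval $[\tfrac13,\tfrac{11}{12}]$; as $\ell(n)\to\infty$ the admissible values of $x=n/4^{\ell(n)}$ become dense, so the quantities $n^{2/\gb}V_n$ cluster along a whole interval of values rather than at a single point, reconfirming non-existence. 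The main technical point to watch is the interplay between the continuous parameter $x$ and the discreteness of $n$: one must check that the range furnished by the lemma is genuinely realized as subsequential limits of the integer-indexed sequence, and that the three regimes $m=1,2,3$ are each treated. This bookkeeping is the only delicate part, and it is in any case inessential for the conclusion, since the two explicit constant subsequences $n=4^{\ell}$ and $n=2\cdot4^{\ell}$ already furnish distinct limit values.
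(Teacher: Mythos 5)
Your proof is correct, and it takes a genuinely more economical route than the paper's. The paper works with the subsequence $(n_k)$ determined by $4^{\ell(n_k)}\le n_k<2\cdot 4^{\ell(n_k)}$, writes $n_k^{2/\gb}V_{n_k}$ as $f(n_k/4^{\ell(n_k)})$ for the auxiliary function $f$ of its preceding lemma, and then --- via $n_{k_\ell}=\lfloor x4^{\ell}\rfloor$, continuity of $f$, and a compactness argument for the converse inclusion --- identifies the \emph{entire} set of accumulation points of $(n_k^{2/\gb}V_{n_k})$ with a nondegenerate interval. That is stronger than non-existence of the limit, but it requires exactly the continuous-versus-discrete bookkeeping you flag at the end. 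You instead evaluate $n^{2/\gb}V_n$ exactly on the two subsequences $n=4^{\ell}$ and $n=2\cdot4^{\ell}$, where Theorem~\ref{Th1}$(i)$ and the scaling $V_{m4^{\ell}}=9^{-\ell}V_m$ (from Proposition~\ref{prop3}) give the constant values $\frac{1}{4}$ and $\frac{5}{12}$; since $\frac{1}{4}<\frac{5}{12}$, the liminf and limsup differ and you are done, with no auxiliary lemma and no floor-function argument. One further point in your favor: your constants expose a slip in the paper. The lemma there defines $f(x)=\frac{1}{36}x^{2/\gb}(13-x)$ with claimed range $[\frac{1}{3},\frac{11}{12}]$, but the paper's own displayed computation in the proof of Theorem~\ref{Th3} produces $\frac{1}{36}x^{2/\gb}(13-4x)$, whose values at $x=1$ and $x=2$ are $\frac{1}{4}$ and $\frac{5}{12}$ --- in agreement with your direct computations (and note $\frac{1}{4}\notin[\frac{1}{3},\frac{11}{12}]$ even though $n=4^{\ell}$ lies in the paper's subsequence, so the claimed accumulation set cannot be right as stated; a factor of $4$ was dropped in the lemma). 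Consequently, the deferential part of your last paragraph, which quotes the range $[\frac{1}{3},\frac{11}{12}]$, should be read with the corrected $f$; your main two-subsequence argument is untouched by this and is fully rigorous.
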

\begin{proof} Let $(n_k)_{k\in \D N}$ be a subsequence of the set of natural numbers such that $4^{\ell(n_k)}\leq n_k<2\cdot 4^{\ell(n_k)}$. The assertion of the theorem will follow if we show that the set of accumulation points of the sequence $(n_k^{\frac 2\gb} V_{n_k})_{k\geq 1}$ is $[\frac {1}{3}, \frac {11}{12}]$. Let  $y \in [\frac {1}{3}, \frac {11}{12}], $ then $y=f(x)$ for some $x\in [1, 2]$. Set $n_{k_\ell}=\lfloor x 4^{\ell}\rfloor$, where $\lfloor x 4^{\ell}\rfloor$ denotes the greatest integer less than or equal to $ x 4^{\ell}$. Then, $n_{k_\ell}<n_{k_{\ell+1}}$ and $\ell(n_{k_\ell})=\ell$, and there exists $x_{k_\ell} \in [1, 2]$ such that $n_{k_\ell}=x_{k_\ell} 4^\ell$. Recall that by $\ell(n_{k_\ell})=\ell$ it is meant that $4^\ell\leq n_{k_\ell}<4^{\ell+1}$. Notice that if $4^{\ell(n)}\leq n\leq 4^{\ell(n)+1}$, then by Theorem~\ref{Th1}, we have
\begin{align*}
V_n=(2 \cdot 4^{\ell(n)}-n) \frac 1{36^{\ell(n)}}\frac 14+(n-4^{\ell(n)}) \frac 1{36^{\ell(n)}} \frac 5{36}=\frac 1{36^{\ell(n)+1}} (13 \cdot 4^{\ell(n)}-4n).
\end{align*}
Thus, putting the values of $n_{k_\ell}$ and $V_{n_{k_\ell}}$, we obtain
\begin{align*}
n_{k_\ell}^{\frac 2\gb} V_{n_{k_\ell}}=n_{k_\ell}^{\frac 2\gb}  \frac 1{36^{\ell+1}} (13 \cdot 4^{\ell}-4n_k)=x_{k_\ell}^{\frac 2\gb} 4^{\frac 2 \gb} \frac 1{36^{\ell+1}} (13 \cdot 4^{\ell}-4 x_{k_\ell} 4^\ell)=x_{k_\ell}^{\frac 2\gb} 9^\ell \frac 1{36^{\ell+1}} (13 \cdot 4^{\ell}-4 x_{k_\ell} 4^\ell),
\end{align*}
which yields
\begin{align} \label{eq45} n_{k_\ell}^{\frac 2\gb} V_{n_{k_\ell}}=\frac1{36}x_{k_\ell}^{\frac 2\gb} (13-4x_{k_\ell})=f(x_{k_\ell}).\end{align}
Again, $x_{k_\ell} 4^{\ell}\leq x 4^\ell<x_{k_\ell} 4^{\ell}+1$, which implies $x-\frac 1{4^{\ell}}< x_{k_\ell} \leq x$, and so, $\mathop{\lim}\limits_{\ell\to \infty} x_{k_\ell}=x$. Since, $f$ is continuous, we have
\[\mathop{\lim}\limits_{\ell\to \infty}  n_{k_\ell}^{\frac 2\gb} V_{n_{k_\ell}}=f(x)=y,\]
which yields the fact that $y$ is an accumulation point of the subsequence $(n_k^{\frac 2\gb}  V_{n_k})_{k\geq 1}$ whenever $y\in [\frac {1}{3}, \frac {11}{12}]$. To prove the converse, let $y$ be an accumulation point of the sequence  $(n_k^{\frac 2 \gb} V_{n_k})_{k\geq 1}$. Then, there exists a subsequence  $(n_{k_i}^{\frac 2\gb} V_{n_{k_i}})_{i\geq 1}$ of  $(n_k^{\frac 2\gb} V_{n_k})_{k\geq 1}$ such that $\mathop{\lim}\limits_{i\to \infty}n_{k_i}^{\frac 2\gb} V_{n_{k_i}}=y$. Set $\ell_{k_i}=\ell(n_{k_i})$ and $x_{k_i}=\frac{n_{k_i}}{4^{\ell_{k_i}}}$. Then,  $x_{k_i} \in[1, 2]$, and as shown in \eqref{eq45}, we have
\[n_{k_i}^{\frac 2\gb} V_{n_{k_i}}=f(x_{k_i}).\]
Let $(x_{k_{i_j}})_{j\geq 1}$ be a convergent subsequence of $(x_{k_i})_{i\geq 1}$, then we obtain
\[y=\lim_{i\to \infty} n_{k_i}^{\frac 2\gb} V_{n_{k_i}}=\lim_{j\to \infty}n_{k_{i_j}}^{\frac 2\gb}  V_{n_{k_{i_j}}}=\lim_{j\to \infty}f(x_{k_{i_j}}) \in [\frac {1}{3}, \frac {11}{12}].\]
Thus, the set of accumulation points of the sequence $(n_k^{\frac 2\gb} V_{n_k})_{k\geq 1}$ is $[\frac {1}{3}, \frac {11}{12}]$, which yields the proof of the theorem.
\end{proof}

\bigskip

\end{document}